\providecommand{\cal}{\mathcal}
\renewcommand{\Bbb}{\mathbb}
\newenvironment{pf}{\begin{proof}}{\end{proof}}
\newcommand{\Ef}{{\cal{F}}}
\newcommand{\Pee}{{\cal{P}}}
\newcommand{\Yu}{{\cal{U}}}
\newcommand{\Qyu}{{\Bbb{Q}}}
\newcommand{\Err}{{\Bbb{R}}}
\newcommand{\lam}{{\lambda}}
\newcommand{\eps}{\varepsilon}
\renewcommand{\phi}{\varphi}
\renewcommand{\rho}{\varrho}
\newcommand{\rest}{\restriction}
\newcommand{\ntr}{{n\in\omega}}
\newcommand{\loe}{\leq}
\newcommand{\goe}{\geq}
\newcommand{\subs}{\subseteq}
\newcommand{\sups}{\supseteq}
\newcommand{\ext}{\operatorname{ext}}
\newcommand{\diam}{\operatorname{diam}}
\newcommand{\id}[1]{{\operatorname{id}_{#1}}} 
\newcommand{\liminv}{\varprojlim}
\newcommand{\limind}{\varinjlim}
\newcommand{\by}{/}
\newcommand{\setof}[2]{\{#1\colon #2\}}
\newcommand{\sett}[2]{\{#1\}_{#2}}
\newcommand{\sn}[1]{\{#1\}} 
\newcommand{\dn}[2]{\{#1,#2\}} 
\newcommand{\pair}[2]{\langle #1, #2 \rangle} 
\newcommand{\triple}[3]{\langle #1, #2, #3 \rangle} 
\newcommand{\map}[3]{#1\colon #2 \to #3} 
\newcommand{\img}[2]{#1[#2]} 
\newcommand{\inv}[2]{{#1}^{-1}[#2]} 
\newcommand{\fra}{Fra\"iss\'e}
\providecommand{\nat}{\omega}
\newcommand{\ciag}[1]{{\sett{{#1}_n}{\ntr}}}
\newcommand{\fS}{{\mathfrak{S}}}
\newcommand{\Ob}{{\mathrm{Ob}}}
\newcommand{\Sim}{\rp \fS}
\newcommand{\cmp}{\circ} 
\newcommand{\invsys}[5]{\langle {#1}_{#4};{#2}_{#4}^{#5};#3 \rangle}
\newcommand{\rp}[1]{{\ddag{#1}}} 
\newcommand{\ob}[1]{\operatorname{Ob}\left(#1\right)}
\newcommand{\proto}[1]{{\mathbb S_\kappa}}
\newtheorem{tw}{Theorem}[section]
\newtheorem{wn}[tw]{Corollary}
\newtheorem{lm}[tw]{Lemma}
\newtheorem{prop}[tw]{Proposition}
\theoremstyle{definition}
\theoremstyle{remark}
\newtheorem{uwgi}[tw]{Remark}
\newcommand{\Komp}{\mathfrak K\mathfrak o\mathfrak m\mathfrak p}
\newcommand{\cplus}{\rp{\mathfrak C}}
\newcommand{\Fans}{\rp {\mathfrak F}}
\newcommand{\Lelfan}{{\mathbb L}}
\newcommand{\Cantor}{2^\omega}
\newcommand{\koment}[1]{}
\title{The Lelek fan and the Poulsen simplex as \fra\ limits}
\author{
{\sc Wies{\l}aw Kubi\'s}\footnote{Research of the first author supported by GA\v CR grant P201/12/0290 and RVO 67985840.}\\ \\
{\small Institute of Mathematics, Czech Academy of Sciences}\\
{\small Prague, Czech Republic}\\
{\small {\it and}}\\
{\small Faculty of Mathematics and Natural Sciences, College of Science}\\
{\small Cardinal Stefan Wyszy\'nski University}\\
{\small Warsaw, Poland}
\and
{\sc Aleksandra Kwiatkowska}\\ \\
{\small Institute of Mathematics, University of  Bonn} \\
{\small Bonn, Germany}
{\small {\it and}}\\
{\small Instytut Matematyczny, Uniwersytet Wroc{\l}awski}\\
{\small Wroc{\l}aw, Poland}
}
\date{\today\ \clocktime}
\begin{document}

\maketitle

\begin{abstract}
We describe the \emph{Lelek fan}, a smooth fan whose set of end-points is dense, and the \emph{Poulsen simplex}, a Choquet simplex whose set of extreme points is dense,
 as \fra\ limits in  certain natural categories of embeddings and projections.
 As an application we give a short proof of their uniqueness, universality, and almost homogeneity.
We further show that for every two countable dense subsets of end-points of the Lelek fan there exists an auto-homeomorphism of the fan mapping one set onto the other.
This improves a result of Kawamura, Oversteegen, and Tymchatyn from 1996.

\ \\
\noindent \textit{Keywords:}
Lelek fan, smooth fan, retraction, \fra\ limit.

\noindent \textit{MSC (2010):}
18B30,  
54B30,  
54F50,  
54C15.  
\end{abstract}

\tableofcontents

\section{Introduction}

The theory of \emph{\fra\ limits} has been recently extended beyond first-order structures, in particular, covering some topological spaces.
For example, one can consider a class of ``small"  (for example, compact) topological spaces with certain properties, looking at inverse limits coming from sequences in the fixed category of topological spaces.
Under some natural conditions, there exists a universal space that maps onto all spaces from the class and satisfies a variant of ``surjective homogeneity", implying that its group of auto-homeomorphisms is rich.
Such a space is unique and is called the \emph{\fra\ limit} of the class under consideration.
Of course, the details are more complicated, we refer to \cite{K_flims}, \cite{KubMF}, and \cite{IrwSol} for more information.

In this note we first consider a very special and simple class of compact metric spaces, namely \emph{finite fans}, together with a very particular class of continuous mappings.
We recognize its \fra\ limit as the \emph{Lelek fan}, a well-known object in geometric topology, constructed by Lelek~\cite{Lelek} in 1961, whose uniqueness was not discovered for almost 30 years.

The Lelek fan has been recently studied by Barto\v sov\'a and Kwiatkowska  \cite{BarKwi} in the framework of the projective \fra\ theory developed by Irwin and Solecki  \cite{IrwSol}.
Our aim is to describe the Lelek fan as a \fra\ limit in the sense of Kubi\'s \cite{KubMF}.
We are going to do it by developing a natural geometric structure of smooth fans.
This will  allow us to strengthen a few results from \cite{BarKwi} and to prove new ones.

The Poulsen simplex, constructed  by Poulsen \cite{Poul} in the same year as the Lelek fan, was studied in the 70s by  Lindenstrauss,  Olsen, and Sternfeld \cite{LOS}, who showed its uniqueness, universality, and homogeneity.
The space of real-valued continuous affine functions on the Poulsen simplex, from which one can recover the Poulsen simplex as the space of states,
 was constructed  in a  Fra\"{i}ss\'e-theoretic framework by Conley and T\"ornquist (unpublished). 
Independently from our work, constructions of the Poulsen simplex as
a \fra\ limit  were recently given in the work of Barto\v sov\'a, Lopez-Abad, and Mbombo, and in the paper by Lupini \cite{L}, who uses the framework of  the
\fra\ theory for metric structures
in the sense of Ben Yaacov \cite{IBY}, and for that he works with  the category of real-valued continuous affine functions  on Choquet simplices
which forms a  category dual to the one of  Choquet simplices.  
In fact, Lupini shows how to construct a number of objects from functional analysis as \fra\ limits. 

We describe the Poulsen simplex in the Fra\"{i}ss\'e-theoretic framework developed by Kubi\'s \cite{KubMF}. The advantage of such approach is that we work directly with simplices and affine maps between them,
rather than in the dual category. 

We would like to point out that the  \fra\ families we will consider to obtain the Lelek fan and to obtain the Poulsen simplex are very similar,
and in both cases the morphisms will be affine projections.

The set $E$ of end-points of the Lelek fan is an interesting topological space, studied in detail by Kawamura, Oversteegen, and Tymchatyn.
One of their results~\cite[Thm. 12]{Kawamuraetal} states that for every two countable dense subsets of $E$ 
 there is an auto-homeomorphism of $E$ moving the first set onto the other.
We generalize their theorem  and
show that for every two countable dense subsets of $E$ 
there exists an auto-homeomorphism of the full Lelek fan which is  linear on each spike and   moves the first set onto the other.
We further present an example of an auto-homeomorphism of $E$ that does not extend to the Lelek fan,
which shows that our result does not follow from their result.

\section{Preliminaries}

Denote by $\Komp$ the category of all non-empty compact second countable spaces with continuous mappings.
We shall consider the category $\rp \Komp$ whose objects are non-empty compact metric spaces and arrows are pairs of the form $\pair e p$, where $e$ is a continuous injection and $p$ is a continuous surjection 
satisfying $p \cmp e = \id {A}$, where $A$ is the domain of $e$.
The composition  is $\pair e p \cmp \pair {e'}{p'}$ is $\pair {e \cmp e'}{p' \cmp p}$.
The \emph{domain} of an arrow $\pair e p$ is, by   definition, the domain of $e$.
Its \emph{co-domain} is the domain of $p$.

We will review the Fra\"{i}ss\'e-theoretic framework introduced by  Kubi\'s \cite{KubMF}, focusing only on subcategories of  $\Komp$.
Let $\mathfrak{C}$ be a subcategory of $\Komp$ and let $\cplus$ be a subcategory of $\rp \Komp$ such that 
$\Ob(\mathfrak{C})=\Ob(\mathfrak{\cplus})$.
Let us assume that each $F\in \Ob(\mathfrak{C})$ is equipped with a metric $d_F$.
Given two $\cplus$-arrows $\map{f,g} F G$, $f = \pair e p$, $g = \pair i q$, we define
$$d(f,g) =
\begin{cases}
\max_{y \in G}d_F(p(y),q(y)) & \qquad \text{if }e = i,\\
+\infty & \qquad \text{otherwise}.
\end{cases}$$
This defines a metric on the set of all arrows from $F$ to $G$.
Using the same formula, we define a metric on the set of all arrows from $F$ to $G$, where $F\in \Ob(\mathfrak{C})$ 
and $G$ is (the limit of) a $\cplus$-sequence; it will appear in results about almost homogeneity.
Say that $\cplus$ equipped with the metric $d$ is a \emph{metric category} if
$d(f_0\circ g,f_1\circ g)\leq d(f_0,f_1)$ and $d(h\circ f_0,h\circ f_1)\leq d(f_0,f_1)$, whenever the composition makes sense.

We say that $\cplus$ is \emph{directed} if for every $A,B\in\cplus$ there is $C\in\cplus$ such that there exist arrows from $A$ to $C$ and from $B$ to $C$.
Say that $\cplus$ has the \emph{almost amalgamation property} if for every $\cplus$-arrows $f\colon C\to A$, $g\colon C\to B$, for every $\eps>0$, there exist
$\cplus$-arrows $f'\colon A\to W$, $g'\colon B\to W$ such that $d(f'\circ f,g'\circ g)<\eps$. It has the  \emph{strict amalgamation property} if we can have $f'$ and $g'$ as above
satisfying $f'\circ f=g'\circ g$.

The category $\cplus$ is \emph{separable} if there is a countable subcategory $\Ef$ such that
\begin{enumerate}[itemsep=0pt]
	\item[(1)] for every $X \in \ob{\cplus}$ there are $A \in \ob{\Ef}$ and an arrow $\map{f}{X}{A}$;
	\item[(2)] for every $\cplus$-arrow $f\colon A\to Y$ with $A\in \Ob(\Ef)$. for every $\eps>0$ there exists an $\cplus$-arrow  $g\colon Y\to B$ and an $\Ef$-arrow $u\colon A\to B$ such that
	$d(g\circ f, u)<\eps$.
\end{enumerate}

We say that a $\cplus$-sequence $\vec U =  \invsys U u \omega m n$ is \emph{a \fra\ sequence} if the following holds:
\begin{enumerate}
\item[(F)] Given $\eps>0$, $m\in\omega$, and an arrow  $\map f {U_m}F$, where $F\in \Ob(\cplus)$, there exist $m<n$ and an arrow  $\map g F {U_n}$
such that $d(g \cmp f, u_m^n) < \eps$.
\end{enumerate}

\begin{tw}[Thm. 3.3, \cite{KubMF}]\label{fraseq}
 Let $\cplus$ be a directed  metric subcategory of $\rp\Komp$ with the almost amalgamation property. The following conditions are equivalent:
 \begin{itemize}
\item[(a)] $\cplus$ is separable.
\item[(b)] $\cplus$ has a \fra\ sequence.
\end{itemize}
\end{tw}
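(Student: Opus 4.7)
The plan is to establish the two implications separately. The implication \textbf{(b) $\Rightarrow$ (a)} is essentially a bookkeeping exercise: assuming a \fra\ sequence $\vec U = \invsys U u \omega m n$ exists, I would take $\Ef$ to be the countable subcategory generated by the objects $\{U_n : n \in \omega\}$ together with the bonding arrows $u_m^n$ and their rational-$\eps$ approximants provided by (F). Condition~(1) of separability follows from directedness combined with (F): given $X \in \Ob(\cplus)$, directedness supplies some $C$ with arrows $X \to C$ and $U_0 \to C$; applying (F) to the latter produces an arrow $C \to U_n$, which composed with $X \to C$ yields the required arrow into $\Ef$. Condition~(2) is a direct reading of (F): given an $\Ef$-arrow $f\colon U_m \to Y$, (F) with $F = Y$ hands over some $g\colon Y \to U_n$ with $d(g \circ f, u_m^n) < \eps$, and $u_m^n$ is itself an $\Ef$-arrow.

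For the harder direction \textbf{(a) $\Rightarrow$ (b)}, I would build the sequence $\vec U$ by recursion. First I fix a countable subcategory $\Ef$ witnessing separability, and enumerate the countable list of ``tasks'' of the form $(m, f, \eps)$ where $m \in \omega$, $f$ is an $\Ef$-arrow with domain (to be interpreted as) $U_m$, and $\eps$ is a positive rational; a standard diagonal enumeration guarantees each task is visited cofinally often. Starting with $U_0 \in \Ob(\Ef)$ chosen arbitrarily, at stage $n+1$ I pick the currently scheduled task $(m, f\colon U_m \to B, \eps)$ with $B \in \Ob(\Ef)$, apply almost amalgamation to $u_m^n \colon U_m \to U_n$ and $f\colon U_m \to B$ to obtain arrows $\phi\colon U_n \to W$ and $\psi\colon B \to W$ with $d(\phi \circ u_m^n, \psi \circ f) < \eps/2$, and then invoke separability condition~(1) to find $h\colon W \to U_{n+1}$ with $U_{n+1} \in \Ob(\Ef)$. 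Setting $u_n^{n+1} = h \circ \phi$ and noting that $h \circ \psi \circ f$ is an arrow from $U_m$ to $U_{n+1}$ with $d(h \circ \psi \circ f, u_m^{n+1}) < \eps/2$ (using that composition with $h$ does not increase distance by the metric category axiom), I obtain the approximate factorization demanded by (F), restricted to $\Ef$-arrows.

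The main obstacle is extending property (F) from $\Ef$-arrows to arbitrary arrows $f\colon U_m \to F$ with $F \in \Ob(\cplus)$, since a priori the construction only handles tasks landing in $\Ef$. This is exactly where separability condition~(2) enters: given such an $f$ and $\eps > 0$, it supplies an $\Ef$-arrow $u\colon U_m \to B$ and an arrow $g\colon F \to B$ with $d(g \circ f, u) < \eps/2$; the previous paragraph's construction applied to the task $(m, u, \eps/2)$ then produces some $h\colon B \to U_n$ with $d(h \circ u, u_m^n) < \eps/2$, and the composition $h \circ g \colon F \to U_n$ satisfies
\[
d(h \circ g \circ f,\, u_m^n) \loe d(h \circ g \circ f,\, h \circ u) + d(h \circ u,\, u_m^n) \loe d(g \circ f,\, u) + \eps/2 < \eps,
\]
again by the metric category inequalities. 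The delicate part is organizing the enumeration so that every pair $(m, u)$ with $u$ an $\Ef$-arrow of domain $U_m$ (for $m$ ranging over all indices as they are created) is addressed with every rational $\eps > 0$; since the pool of $\Ef$-arrows grows by only countably many new ones at each stage, a standard priority argument suffices. Putting everything together yields a \fra\ sequence, completing the equivalence.
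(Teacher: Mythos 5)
This theorem is imported from \cite{KubMF} (Thm.~3.3) and the paper gives no proof of it, so there is nothing internal to compare against. Your argument is the standard one (essentially the proof in \cite{KubMF}) and is correct: the direction (b)$\Rightarrow$(a) reads separability off condition (F) together with directedness, and (a)$\Rightarrow$(b) is the recursive construction with countable bookkeeping over $\Ef$, using almost amalgamation plus separability condition (1) at each stage and condition (2) to upgrade (F) from $\Ef$-arrows to arbitrary $\cplus$-arrows, with the metric-category inequalities controlling the errors. One cosmetic slip: in verifying separability condition (2) from (F) you write ``given an $\Ef$-arrow $f\colon U_m\to Y$'', whereas (2) quantifies over arbitrary $\cplus$-arrows with domain in $\Ob(\Ef)$; your appeal to (F) covers that general case verbatim, so nothing is lost.
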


We have the following general theorems  for a metric category $\cplus$ as above. All the necessary definitions are given on pages 5-7 in \cite{KubMF} .

\begin{tw}[Uniqueness, Thm.  3.5, \cite{KubMF}]\label{uniq}
There exists at most one  \fra\ sequence (up to an isomorphism).
\end{tw}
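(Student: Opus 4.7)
The plan is a back-and-forth argument adapted to the metric-categorical setting. Let $\vec U = \invsys U u \omega m n$ and $\vec V = \invsys V v \omega m n$ be two \fra\ sequences in $\cplus$, and fix once and for all a summable sequence of tolerances $\eps_k > 0$. The goal is to construct strictly increasing index sequences $m_0 < m_1 < \cdots$ and $n_0 < n_1 < \cdots$ together with $\cplus$-arrows $f_k\colon U_{m_k}\to V_{n_k}$ and $g_k\colon V_{n_k}\to U_{m_{k+1}}$ that zigzag between the two sequences, satisfying
\[
d(g_k\cmp f_k,\,u_{m_k}^{m_{k+1}}) < \eps_{2k} \oraz d(f_{k+1}\cmp g_k,\,v_{n_k}^{n_{k+1}}) < \eps_{2k+1}.
\]

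The initial arrow $f_0$ is produced by using directedness to amalgamate $U_0$ and $V_0$ in some common object $W$ and then applying property (F) to the $\vec V$-side, absorbing the $V_0\to W$ leg back into some $V_{n_0}$; composition then yields $f_0\colon U_0\to V_{n_0}$ with $m_0=0$. At the inductive step, property (F) for $\vec U$ applied to the input arrow $f_k$ produces $m_{k+1}$ and $g_k$ satisfying the first estimate, and symmetrically, property (F) for $\vec V$ applied to $g_k$ produces $n_{k+1}$ and $f_{k+1}$ satisfying the second.

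Once the zigzag is built, the 1-Lipschitz inequalities $d(h\cmp f_0, h\cmp f_1)\loe d(f_0,f_1)$ and $d(f_0\cmp g, f_1\cmp g)\loe d(f_0,f_1)$ of a metric category, combined with summability of the $\eps_k$, guarantee that iterated compositions of the $f_k$'s and $g_k$'s with the bonding maps form Cauchy sequences of arrows. These limits assemble into morphisms $F\colon \vec U\to\vec V$ and $G\colon \vec V\to\vec U$ in the natural category of $\cplus$-sequences, and the approximate compositional equalities above pass to exact equalities in the limit, so $G\cmp F$ and $F\cmp G$ become the identity morphisms of $\vec U$ and $\vec V$ respectively.

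The main obstacle is the bookkeeping: the $\eps_k$ must decay rapidly enough (for instance $\eps_k < 2^{-k}$) so that telescoping sums of approximation errors stay below any prescribed bound, and one must verify that the injection components of the arrows produced by property (F) align so that the arrow metric remains finite at every stage. The latter is automatic, since property (F) only delivers arrows $g$ for which $g\cmp f$ sits at finite $d$-distance from $u_m^n$, which forces the injection parts to coincide. Once these details are handled, a standard inverse-limit computation converts the zigzag into an honest isomorphism of $\cplus$-sequences.
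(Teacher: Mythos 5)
Your argument is correct and is essentially the standard proof: the paper itself does not prove this statement but quotes it from \cite{KubMF} (Thm.~3.5), and the proof there is exactly the back-and-forth construction you describe --- a zigzag of arrows between the two sequences obtained by alternating applications of condition (F), with summable error tolerances, whose compositions converge (using the 1-Lipschitz property of composition and the fact that finite $d$-distance forces the embedding components to agree) to mutually inverse morphisms of sequences. Your observation that directedness is needed only to produce the initial arrow $f_0$, while the induction runs on (F) alone, also matches the cited argument.
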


\begin{tw}[Universality, Thm. 3.7, \cite{KubMF}]\label{univ}
Suppose that $\vec{U}$ is a \fra\ sequence  in $\cplus$.
Then for every sequence  $\vec{X}$ in $\cplus$ there is an arrow $f\colon \vec{X}\to \vec{U}$.
\end{tw}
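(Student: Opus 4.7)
The plan is to build, by recursion on $n\in\omega$, a sequence of $\cplus$-arrows $f_n\colon X_n\to U_{k(n)}$ for a strictly increasing sequence $k(0)<k(1)<\cdots$ that intertwines the bonding maps of $\vec X$ and $\vec U$ up to a summable error, and then to pass to the limit in the metric $d$ on arrows from an object of $\cplus$ into a $\cplus$-sequence. The resulting compatible family of limiting arrows $X_n\to\vec U$ will assemble into the desired arrow $f\colon\vec X\to\vec U$.

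For the base step, directedness of $\cplus$ produces some $C_0$ together with arrows $h\colon X_0\to C_0$ and $g\colon U_0\to C_0$; apply $(F)$ to $g$ at tolerance $\eps_0$ to obtain $k(0)$ and $r\colon C_0\to U_{k(0)}$ with $d(r\cmp g,u_0^{k(0)})<\eps_0$, and set $f_0:=r\cmp h$. For the inductive step, assume $f_n\colon X_n\to U_{k(n)}$ has been built and let $x_n^{n+1}\colon X_n\to X_{n+1}$ denote the bonding map of $\vec X$. First apply the almost amalgamation property to the cospan $(f_n,x_n^{n+1})$ at tolerance $\eps_n/2$, obtaining an object $W_n$ and arrows $h_n\colon X_{n+1}\to W_n$ and $g_n\colon U_{k(n)}\to W_n$ with $d(h_n\cmp x_n^{n+1},\,g_n\cmp f_n)<\eps_n/2$. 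Next apply $(F)$ to $g_n$ at tolerance $\eps_n/2$, producing $k(n+1)>k(n)$ and $r_n\colon W_n\to U_{k(n+1)}$ with $d(r_n\cmp g_n,\,u_{k(n)}^{k(n+1)})<\eps_n/2$. Put $f_{n+1}:=r_n\cmp h_n$. The two metric-category inequalities for $d$ (contractivity under pre- and post-composition) combine to give
$$d\bigl(u_{k(n)}^{k(n+1)}\cmp f_n,\ f_{n+1}\cmp x_n^{n+1}\bigr)<\eps_n,$$
and choosing $\eps_n=2^{-n}$ makes the total defect summable.

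The main obstacle is to promote this near-commutative ladder to an actual arrow between $\cplus$-sequences. Viewing each $f_n$, composed with the canonical arrow $U_{k(n)}\to\vec U$, as an arrow $\hat f_n\colon X_n\to\vec U$ in the extended metric introduced in the preamble, one checks that for each fixed $n$ the sequence $\hat f_{n+j}\cmp x_n^{n+j}$ $(j\in\omega)$ is Cauchy: contractivity of composition reduces the distance between consecutive terms to the displayed estimate at level $n+j$, which is bounded by $2^{-(n+j)}$. The crucial technical input here is completeness of the metric $d$ on arrows into $\vec U$, inherited from the uniform metric on the limit; granted this, each Cauchy sequence converges to an arrow $\tilde f_n\colon X_n\to\vec U$. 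A straightforward reindexing of the defining limits yields $\tilde f_{n+1}\cmp x_n^{n+1}=\tilde f_n$, so the family $\{\tilde f_n\}_{n\in\omega}$ is compatible with the bonding maps of $\vec X$, hence constitutes the desired arrow $f\colon\vec X\to\vec U$.
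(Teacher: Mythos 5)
The paper does not actually prove this statement---it is imported verbatim from \cite{KubMF} (Theorem 3.7 there)---and your argument is essentially the proof given in that reference: a one-sided back-and-forth ladder obtained from directedness in the base step and from almost amalgamation followed by condition (F) in the inductive step, with errors $\eps_n=2^{-n}$ chosen summable. Your construction and the estimate $d\bigl(u_{k(n)}^{k(n+1)}\cmp f_n,\ f_{n+1}\cmp x_n^{n+1}\bigr)<\eps_n$ are correct. Two points are worth making explicit. First, in $\rp\Komp$ the metric $d$ equals $+\infty$ unless the embedding components coincide; hence the mere finiteness of the amalgamation and (F) estimates forces the embedding halves of your ladder to commute \emph{exactly}, and this is what guarantees that consecutive terms of $\hat f_{n+j}\cmp x_n^{n+j}$ are at finite distance (indeed at most $2^{-(n+j)}$) rather than at distance $+\infty$---you use this tacitly. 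Second, the completeness you invoke does hold in this concrete category: with the embedding component frozen, the projection components form a uniformly Cauchy sequence of continuous surjections of a compact space, whose uniform limit is again a surjection, and the identity $p\cmp e=\id{X_n}$ survives the limit. In the abstract framework of \cite{KubMF} your third paragraph is not needed at all, since an arrow between sequences is there \emph{defined} to be (an equivalence class of) precisely the approximate transformation $\{f_n\}$ built in your first two paragraphs; your limit passage is the concrete realization of that arrow in $\rp\Komp$. (A terminological slip: $(f_n,x_n^{n+1})$ is a span, not a cospan.)
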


\begin{tw}[Almost Homogeneity, Thm. 3.6, \cite{KubMF}]\label{alhom}
Suppose that $\cplus$ has the almost amalgamation property and it has a \fra\  sequence $\vec{U}$.
Then for every $A,B\in \ob{\cplus}$ and  for all arrows $i\colon A\to \vec{U}$, $j\colon B\to \vec{U}$, 
for every $\cplus$-arrow $f\colon A\to B$, for every $\eps>0$, there exists an isomorphism $H\colon \vec{U}\to\vec{U}$ such that 
$d(j\circ f, H\circ i)<\eps$.
\end{tw}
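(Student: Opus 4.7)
The plan is to prove the almost homogeneity theorem by a standard categorical back-and-forth construction along the \fra\ sequence $\vec U = \invsys U u \omega m n$. First I would observe that, by definition of arrow into the limit of a sequence, the given $i\colon A\to\vec U$ factors through some $U_{a_0}$ as $i=u_{a_0}^\infty\circ i_0$ and $j\colon B\to\vec U$ factors through some $U_{b_0'}$ as $j=u_{b_0'}^\infty\circ j_0$. After composing with a suitable transition arrow, I may assume that both factorizations go through the same stage or that $a_0$ has been chosen large enough so that $j\circ f$ itself factors as $\bar f_0\colon A\to U_{b_0}$ with $b_0\geq a_0$. The goal is then to build an isomorphism of $\vec U$ by producing a cofinal interleaved ladder of arrows whose first rung is close to a ``transport'' of $i_0$ to $\bar f_0$.

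Next I would fix a sequence of positive errors $\eps_k$ with $\sum_k \eps_k<\eps$, and inductively construct strictly increasing indices $a_0<b_0<a_1<b_1<\cdots$ together with $\cplus$-arrows
\[
h_k\colon U_{a_k}\to U_{b_k},\qquad h_k'\colon U_{b_k}\to U_{a_{k+1}},
\]
satisfying $d(h_k'\circ h_k,u_{a_k}^{a_{k+1}})<\eps_{2k}$ and $d(h_{k+1}\circ h_k',u_{b_k}^{b_{k+1}})<\eps_{2k+1}$, together with the initial constraint $d(h_0\circ i_0,\bar f_0)<\eps_0$. The step from $h_k$ to $h_k'$ would use the almost amalgamation property applied to $h_k\colon U_{a_k}\to U_{b_k}$ and $u_{a_k}^{a_k}=\id{U_{a_k}}$ against $u_{a_k}^\bullet$ to obtain maps into some $W\in\ob{\cplus}$ making the square approximately commute, and then the \fra\ property (F) to push $W$ back into $U_{a_{k+1}}$ within $\eps_{2k}$ of the relevant bonding map. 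The step from $h_k'$ to $h_{k+1}$ is symmetric. The initialisation is the same argument applied at stage $0$: use AAP on $i_0\colon A\to U_{a_0}$ and $\bar f_0\colon A\to U_{b_0}$ and then (F) to produce $h_0\colon U_{a_0}\to U_{b_1}$ (relabelling $b_0$ after an index shift) with $d(h_0\circ i_0,u_{b_0}^{b_1}\circ\bar f_0)<\eps_0$.

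Finally, the two families $(h_k)$ and $(h_k')$ define compatible co-initial and co-final arrows on the sequence $\vec U$, so by general abstract nonsense on sequential (co)limits in metric categories they induce mutually inverse arrows $H,H^{-1}\colon\vec U\to\vec U$; this is the content of the ladder-lemma underpinning \fra\ theory in \cite{KubMF}. The contractivity assumptions $d(f_0\circ g,f_1\circ g)\leq d(f_0,f_1)$ and $d(h\circ f_0,h\circ f_1)\leq d(f_0,f_1)$ ensure that errors accumulated along the telescopes are bounded by $\sum_k\eps_k<\eps$, so that in the limit
\[
d(H\circ i, j\circ f)\leq \eps_0+\sum_{k\geq 1}\eps_k<\eps,
\]
as required.

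I expect the main technical obstacle to be the book-keeping of the error bounds and the verification that the inductive amalgamation/(F)-step can always be arranged to respect all previously fixed constraints — in particular, that the ``back'' and ``forth'' halves produce truly inverse arrows in the limit and not merely arrows close to the identity at every finite stage. This is resolved precisely because non-expansiveness makes the errors at later stages unable to amplify the tolerance accumulated at earlier stages, and because separability is not needed here (only AAP and the existence of $\vec U$, which already encodes the (F)-property used at each back-and-forth step).
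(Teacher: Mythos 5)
The paper offers no proof of this statement; it is imported verbatim as Theorem~3.6 of \cite{KubMF}. Your back-and-forth construction --- factoring $i$ and $j\circ f$ through finite stages, building an interleaved ladder of arrows via almost amalgamation plus condition (F) with errors $\eps_k$ summing below $\eps$, and passing to the limit isomorphism using non-expansiveness to keep the accumulated error controlled --- is essentially the argument given in that reference, so your proposal matches the intended proof.
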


In particular, we can take above $A=B$ and $f=\id{}$.

In Section \ref{sub-Lprop}, we will work with a slightly more general category than $\rp \Komp$.
Namely, we will consider a subcategory of $\rp\Komp^d$, the category whose objects are pairs of sets $(F^1,F^2)$ with $F^1\subset F^2$, and $F^2$ compact.
An arrow from $(F^1,F^2)$ to $(G^1,G^2)$ in $\rp\Komp^d$ is a pair $\pair e p$ such that $e\colon F^1\to G^1$ is an injection, 
$ p\colon G^2\to  F^2$ is a continuous surjection so that $p \cmp e = \id{F^1}$ (more formally: $(p \rest G^1) \cmp e$ is the inclusion $F^1 \subs F^2$).
All definitions and theorems given above will remain true in this slightly more general setting, as this still will fall under the setting developed in \cite{KubMF}.

A \emph{retraction} is a continuous mapping $\map r X X$ satisfying $r \cmp r = r$.
A mapping $\map f X Y$ is \emph{right-invertible} if there exists a mapping $\map i Y X$ such that $f \cmp i = \id Y$.
Note that in this case $r = i \cmp f$ is a retraction.
On the other hand, if $\map r X X$ is a retraction then $r = i \cmp f$, where $i$ is the inclusion of $Y = \img f X$ and $f$ is the mapping $r$ treated as a surjection onto $Y$.
We shall often speak about retractions, having in mind right-invertible mappings.

\begin{lm}\label{Lembgerg}
Assume that $K$ is a compact space and $\ciag r$ is a sequence of retractions of $K$ that is pointwise convergent to the identity and satisfies $r_n \cmp r_m = r_{\min(n,m)}$ for every $n,m \in \nat$.
Then $K$ is the inverse limit of the sequence consisting of $K_n := \img{r_n}{K}$, such that the bonding map from $K_{n+1}$ to $K_n$ is $r_n \rest {K_{n+1}}$.
\end{lm}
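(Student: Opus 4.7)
The plan is to define the candidate homeomorphism $\phi\colon K\to \liminv K_n$ by $\phi(x) = \seq{r_n(x) : \ntr}$ and verify it is a continuous bijection. Since $K$ is compact (Hausdorff) and $\liminv K_n$ sits inside a product of Hausdorff spaces, any continuous bijection $K\to \liminv K_n$ will automatically be a homeomorphism, so that is all that is needed.

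Before doing anything else I would establish some structural consequences of the identity $r_n \cmp r_m = r_{\min(n,m)}$. Taking $m\loe n$ and applying the identity to an arbitrary element of $K_m$, namely $r_m(y)$, one gets $r_n(r_m(y)) = r_m(y)$, so $r_n\rest{K_m} = \id{K_m}$; in particular $K_m \subs K_n$ whenever $m\loe n$. Hence $r_n \rest {K_{n+1}}$ is a continuous surjection $K_{n+1}\to K_n$ that fixes $K_n$, so the bonding maps of the inverse sequence are well defined. Coherence of $\phi(x)$ is the special case $r_n(r_{n+1}(x)) = r_n(x)$ of the identity, and continuity of $\phi$ follows from continuity of each $r_n$. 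Injectivity is immediate: if $\phi(x) = \phi(y)$, then $r_n(x) = r_n(y)$ for all $n$, and passing to the pointwise limit $r_n \to \id{K}$ yields $x=y$.

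The main content of the argument is surjectivity. Given a coherent sequence $\seq{x_n : \ntr} \in \liminv K_n$, iterating $x_m = r_m(x_{m+1})$ produces the chain $x_m = r_m(r_{m+1}(\dots r_{n-1}(x_n)\dots))$ for any $n>m$. Now I would collapse this long composition using the hypothesis: since $r_m \cmp r_k = r_m$ for every $k\goe m$, repeated application gives $r_m \cmp r_{m+1} \cmp \dots \cmp r_{n-1} = r_m$, so $x_m = r_m(x_n)$ for all $n\goe m$. Next, using compactness, choose a convergent subsequence $x_{n_k} \to x$ in $K$. For each fixed $m$, once $n_k\goe m$ we have $r_m(x_{n_k}) = x_m$, so by continuity of $r_m$ we get $r_m(x) = x_m$, hence $\phi(x) = \seq{x_n : \ntr}$.

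The step I expect to require the most care is the surjectivity argument, specifically unwinding the coherence condition to the uniform formula $r_m(x_n) = x_m$ for all $n\goe m$; this is where the full strength of the assumption $r_n\cmp r_m = r_{\min(n,m)}$ (as opposed to merely $r_n\cmp r_{n+1} = r_n$) gets used, and it is what lets a subsequential limit recover the entire coherent sequence simultaneously for every coordinate.
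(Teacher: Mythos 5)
Your proposal is correct and follows essentially the same route as the paper: define $x\mapsto (r_n(x))_{n}$, get injectivity from pointwise convergence to the identity, coherence from $r_n\cmp r_m=r_{\min(n,m)}$, and surjectivity by extracting a limit point of the coordinates of a thread and applying continuity of each $r_m$. The only cosmetic differences are that you spell out the collapse $x_m=r_m(x_n)$ and invoke a convergent subsequence where the paper takes an accumulation point, and you make explicit the compact-to-Hausdorff argument that the paper leaves implicit.
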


\begin{pf}
Let $P = \prod_{\ntr}K_n$ and define $\map h K P$ by setting $h(x) = (r_n(x))_{\ntr}$.
As $\ciag r$ is pointwise convergent to the identity, $h$ is one-to-one.
The condition $r_n \cmp r_m = r_{\min(n,m)}$ ensures that $\img h K$ is inside the inverse limit $L \subs P$ of the sequence $\vec K = \invsys K r \omega n m$.
Fix $y \in L$. Then $y = (x_n)_{\ntr}$, where $x_n = r_n^m(x_m) = r_n(x_m)$, whenever $n<m$.
Let $x$ be an accumulation point of $\ciag x \subs K$.
Then $r_n(x) = x_n$ for every $\ntr$, therefore $h(x)= y$.
This shows that $\map h K L$ is a homeomorphism.
\end{pf}

\begin{uwgi}\label{RemDwaSzec}
The converse to the above lemma holds as well.
Namely, every inverse sequence of compact spaces $K_n$ whose bonding maps are right-invertible can be turned into a chain $K_0 \subs K_1 \subs \dots$ so that the limiting projections become retractions satisfying the assumptions of Lemma~\ref{Lembgerg}.
The proof is an easy exercise.
\end{uwgi}

For all undefined notions from category theory we refer to~\cite{MacLane}.

\section{The Lelek fan}

In this section we present category-theoretic framework for the class of smooth fans, showing that the Lelek fan is a \fra\ limit.
Next we prove the announced result involving countable dense sets of end-points of the Lelek fan.

\subsection{Geometric theory of smooth fans}

Our universe is the topological space $V = \Delta(\Cantor) = ([0,1] \times \Cantor) \by \sim$, where $2^\omega$ denotes the Cantor set and $\sim$ is the equivalence relation identifying all points of the form $\pair 0 t$, $t \in 2^\omega$ and no other elements of the space.
In other words, $V$ is the cone over the Cantor set.
Obviously, $V$ is embeddable into the plane in such a way that the top of the cone is $0$ and for each $t \in 2^\omega$ the image of the set $[0,1] \times \sn t$ is a straight line segment.
Thus, $V$ is closed under multiplication by non-negative scalars $\loe1$.
We say that a subset $S$ of $V$ is \emph{convex} if $\lam \cdot x \in S$ whenever $\lam \in [0,1)$ and $x \in S$. Of course, this notion has not much to do with the standard convexity in the plane.

The space $V$ is called the \emph{Cantor fan}.
Closed convex subsets of $V$ will be called \emph{geometric fans}.
\emph{Smooth fans} are defined in purely topological terms\footnote{
A \emph{fan} is an arcwise connected hereditarily unicoherent continuum with at most one ramification point, called the \emph{top}.
A fan is \emph{smooth} if for each sequence $(p_n)$ converging to $p$ the sequence of arcs $tp_n$ converges to the arc $tp$, where $t$ is the top point. See~\cite{Char} for more details and historical references.
}, up to homeomorphisms.
It turns out, however, that every smooth fan is homeomorphic to a geometric fan~\cite{Eberhart} and clearly every geometric fan is smooth.
From now on we shall deal with geometric fans only, having in mind that we actually cover the class of all smooth fans.

We shall use the function $\map \rho V [0,1]$ defined by $\rho(s,t) = s$.
We  call $\rho(x)$ the \emph{level of} $x \in V$.
We say that a function $\map f S V$, where $S \subs V$, is \emph{level-preserving} if $\rho(f(s)) = \rho(s)$ for every $s \in S$.
Given a fan $F$, we denote by $E(F)$ the set of its end-points.
Formally, $x \in F$ is an \emph{end-point} if there are no $y\in F$ and $\lam \in [0,1)$ such that $x = \lam \cdot y$.
Obviously, $E(V) = \sn 1 \times 2^\omega$ and every fan $F$ with $E(F)$ finite is actually homeomorphic to the cone over its end-points.
We shall say that $F$ is a \emph{finite fan} if $E(F)$ is finite.
Given $x \in E(F)$, the minimal convex set containing $x$ is the line segment joining it to $0$.
Such a set will be called a \emph{spike}.

Given two geometric fans $F$, $G$, a map $\map f F G$ will be called \emph{affine} if it is continuous and $f(\lam \cdot x) = \lam \cdot f(x)$ for every $x \in F$, $\lam \in [0,1)$.
If $F$ is finite, then the last condition actually implies continuity.

Given a finite geometric fan $F$ embedded into a plane, we define a metric $d_F$ on $F$. Let $d_F(x,y)$ be the length of the shortest path
(which  is always either a line segment or the union of  two line segments) between $x$ and $y$, 
computed with respect to the usual Euclidean metric on the plane, in which $V$ is embedded.
This is obviously a metric compatible with the topology, as long as $F$ is a finite fan.
Later on, we shall consider 1-Lipschitz affine mappings between finite fans, always having in mind the above metrics.

Let $\Delta(X)$ denote the cone over the space $X$.
Fix a geometric fan $K$.
We shall analyze affine quotients of $K$ onto finite fans.
Recall that $K$ ``lives" in the Cantor fan $V = \Delta(\Cantor)$.
A subset $U \subs K$ will be called \emph{a triangle} if it is of the form $U = K \cap \Delta(W)$, where $W \subs \Cantor$ is a non-empty basic clopen set, that is, $W = W_s$, where
$$W_s = \setof{t \in \Cantor}{s \subs t}$$
and $s \in 2^{<\omega}$.
($2^{<\omega}$ denotes, as usual, the set of all finite sequences with values in the set $2 = \dn 01$).
In particular, two triangles are either disjoint or one is contained in the other.
The \emph{size} of a triangle $U$ as above is defined to be $1/n$, where $n$ is the cardinality (length) of the finite sequence $s$.
Note that every triangle $U$ is almost clopen, namely, $U$ is closed and $U\setminus \sn0$ is open.
Now, a \emph{triangular decomposition} $\Yu$ 
of $K$ is a (necessarily finite) family $\Yu$ consisting of triangles, such that $\setof{\img\pi U}{U\in \Yu}$ is a partition of $\img \pi K \subs \Cantor$.
Formally, a triangular decomposition is a covering of $K$ by almost clopen sets and it becomes a partition only after removing the vertex $0$.

Given a triangle $U$, define $\rho(U) = \max\setof{\rho(x)}{x\in U}$.
Now suppose $\Yu$ is a triangular decomposition of $K$ and choose for each $U \in \Yu$ some $a_U \in E(K)$ with $\rho(a_U) = \rho(U)$.
Let $K_0$ be the finite fan whose set of end-points is $\setof{a_U}{U \in \Yu}$.
There is a unique level-preserving continuous affine retraction $\map r K {K_0}$ satisfying $\inv f {[0,a_U]} = U$ for every $U \in \Yu$.

Using the observations above, we can show the following proposition.
\begin{prop}\label{LmfgrYgnX}
Every geometric fan is the inverse limit of a sequence of finite geometric fans whose bonding mappings are affine and level-preserving (therefore 1-Lipschitz).
\end{prop}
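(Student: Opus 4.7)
The plan is to apply Lemma~\ref{Lembgerg} by constructing a sequence of affine, level-preserving retractions $\map{r_n}{K}{K}$ whose images $K_n := \img{r_n}{K}$ are finite geometric fans satisfying $K_0 \subs K_1 \subs \cdots$ and the identity $r_n \cmp r_m = r_{\min(n,m)}$.

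First, I would build a refining sequence of triangular decompositions. For each $\ntr$, let $\Yu_n$ be the family of all non-empty triangles of the form $K \cap \Delta(W_s)$ with $s \in 2^n$. Then $\Yu_{n+1}$ refines $\Yu_n$, every triangle in $\Yu_n$ has size $1/n$, and these widths shrink to zero. Next, I would pick endpoints $a_U \in E(K)$ with $\rho(a_U) = \rho(U)$ coherently by induction on $n$. The key observation is that if $U' \in \Yu_{n+1}$ is a sub-triangle of $U \in \Yu_n$ and $a_U \in U'$, then $\rho(U') \goe \rho(a_U) = \rho(U) \goe \rho(U')$, hence $\rho(U') = \rho(U)$, so one may set $a_{U'} := a_U$. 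For the sibling sub-triangle of $U$ in $\Yu_{n+1}$ (if non-empty), pick any endpoint at its own maximal level. This guarantees $K_n \subs K_{n+1}$.

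The induced retractions $r_n$, defined as in the paragraph preceding the proposition, then automatically satisfy $r_n \cmp r_{n+1} = r_n$: for $x \in U' \in \Yu_{n+1}$ with $U' \subs U \in \Yu_n$, the point $r_{n+1}(x)$ lies on $[0, a_{U'}] \subs [0, a_U]$ at level $\rho(x)$, and $r_n$ sends any such point to the unique element of $[0, a_U]$ at level $\rho(x)$, which coincides with $r_n(x)$. A short induction then yields $r_n \cmp r_m = r_{\min(n,m)}$ for all $m, n \in \nat$. Pointwise convergence $r_n(x) \to x$ follows because, writing $x = \pair{\rho(x)}{t}$ with $t \in \Cantor$, the Cantor coordinate of $a_{U_n}$, where $U_n \in \Yu_n$ is the triangle containing $x$, converges to $t$, so $r_n(x)$ approaches $x$ in the planar metric.

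Lemma~\ref{Lembgerg} now gives $K = \liminv K_n$ with bonding maps $r_n \rest K_{n+1}$, which are visibly affine and level-preserving. The 1-Lipschitz property with respect to the metrics $d_{K_n}$ follows from the elementary observation that $d_F(x,y) \loe \rho(x) + \rho(y)$ in any geometric fan $F$, with equality for points on distinct spikes; a level-preserving affine collapse can only identify spikes and preserves the level of each point, hence it can only decrease these distances. The main obstacle is arranging the coherent choice of the endpoints $a_U$ so that the chain $K_n \subs K_{n+1}$ and the identity $r_n \cmp r_m = r_{\min(n,m)}$ hold simultaneously; once this is in place, the rest of the argument follows routinely from Lemma~\ref{Lembgerg}.
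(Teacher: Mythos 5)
Your proof is correct and follows essentially the same route as the paper's: a refining sequence of triangular decompositions of size $1/n$, the associated level-preserving affine retractions, and Lemma~\ref{Lembgerg}. The only difference is that you spell out details the paper leaves implicit, in particular the coherent inductive choice of the end-points $a_U$ (using $\rho(U')=\rho(U)$ when $a_U\in U'$) needed to make the images form a chain and to get the full identity $r_n\cmp r_m=r_{\min(n,m)}$, which is a worthwhile clarification rather than a deviation.
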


\begin{pf}
We  choose a sequence of triangular decompositions $\Yu_n$ such that the size of each triangle in $\Yu_n$ is not more than $1/n$, and $\Yu_{n+1}$ refines $\Yu_n$.
For each $\Yu_n$ we take a level-preserving  affine retraction as in the remarks above. This gives the required sequence.
\end{pf}

We are interested in a better representation of geometric fans.
An \emph{embedding} of geometric fans is, by definition, a one-to-one continuous affine map. By compactness, this is obviously a topological embedding.
An embedding $\map e F G$ will be called \emph{stable} if $\img e {E(F)} \subs E(G)$.
If $e$ is the inclusion then this just means that $G$ is obtained from $F$ by adding some new spikes and not enlarging the existing ones.

\begin{lm}\label{LmRaTxGh}
Let $G$ be a finite geometric fan and let $\map q F G$ be an affine retraction between geometric fans.
Then there exists a stable embedding $\map e G F$ such that $q \cmp e = \id G$.
\end{lm}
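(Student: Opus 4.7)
The plan is to build the right inverse $e$ one spike at a time. Since $G$ is a finite fan, write $E(G) = \{a_1,\dots,a_k\}$. First I shall produce, for each $i$, an end-point $b_i \in E(F)$ with $q(b_i) = a_i$; then I shall set $e(\lambda \cdot a_i) = \lambda \cdot b_i$ for $\lambda \in [0,1]$, together with $e(0)=0$, and verify that this prescription assembles into a stable embedding satisfying $q \cmp e = \id{G}$.

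The heart of the proof is producing the $b_i$. Affinity forces $q(0) = 0$ (take $\lambda = 0$ in the defining relation), so $a_i \neq 0$ implies that any preimage $x_i \in q^{-1}(a_i)$ is nonzero. On the ray from the origin through $x_i$, the set $F$ intersects in a line segment (by closedness and convexity of $F$), ending at a unique end-point $b_i \in E(F)$; write $x_i = \mu_i \cdot b_i$ for some $\mu_i \in (0,1]$. Then
\[
a_i \;=\; q(x_i) \;=\; q(\mu_i \cdot b_i) \;=\; \mu_i \cdot q(b_i),
\]
so $q(b_i) = (1/\mu_i)\cdot a_i$ sits on the spike of $a_i$ in $G$ at level $\rho(a_i)/\mu_i \geq \rho(a_i)$. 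But $a_i$ is the topmost point of its spike in $G$, hence $\rho(q(b_i)) \leq \rho(a_i)$; this forces $\mu_i = 1$ and $q(b_i) = a_i$.

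With the $b_i$ chosen, $e$ is affine by construction and continuous on $G$, since $G$ has only finitely many spikes meeting at $0$ and $\lambda \cdot b_i \to 0$ as $\lambda \to 0$ uniformly in $i$. The composition $q \cmp e$, restricted to the spike of $a_i$, sends $\lambda \cdot a_i$ to $\lambda \cdot q(b_i) = \lambda \cdot a_i$, so $q \cmp e = \id{G}$, and $e$ is stable since $e(a_i) = b_i \in E(F)$. For injectivity, suppose $a_i \neq a_j$ while $b_i, b_j$ lie on the same spike of $F$: as both are end-points of $F$, neither can be a proper rescaling of the other, so $b_i = b_j$; applying $q$ then gives $a_i = a_j$, a contradiction. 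Hence $e$ sends distinct spikes of $G$ into distinct spikes of $F$, and being linear and nonzero on each, is injective overall.

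The only step requiring real thought is the extremality argument producing $b_i$; once that is in hand, the remaining verifications are routine bookkeeping over a finite set of spikes.
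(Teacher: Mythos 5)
Your proof is correct and follows essentially the same route as the paper's: choose a preimage of each end-point of $G$, observe (via affinity of $q$ on spikes) that it must itself be an end-point of $F$, and let $e$ be the induced affine map, with continuity automatic since $G$ is a finite fan. You simply spell out in more detail the steps the paper leaves implicit (why the preimage is an end-point, injectivity of $e$, and the verification $q \cmp e = \id G$).
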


\begin{pf}
For each $x \in E(G)$ choose $y_x \in F$ such that $q(y_x) = x$.
Necessarily $y_x \in E(F)$, because $q$ is linear on each spike.
Define $\map e G F$ to be the unique affine map such that $e(x) = y_x$.
Continuity comes automatically from the fact that $G$ is a finite fan.
Thus, $e$ is a stable embedding.
That $q \cmp e = \id G$ follows from the fact that a linear self-map of a closed interval fixing its end-points must be the identity.
\end{pf}

Combining Proposition \ref{LmfgrYgnX},  Lemma \ref{LmRaTxGh} and Remark~\ref{RemDwaSzec}, we obtain the following theorem.

\begin{tw}\label{ThmMejnRrr}
Let $F$ be a geometric fan.
Then there exists a chain
$$F_0 \subs F_1 \subs F_2 \subs \dots \subs F$$
of finite fans, such that $E(F_n) \subs E(F)$ for each $\ntr$, and there exist 1-Lipschitz affine retractions $\map {r_n}F{F_n}$ such that $r_n \cmp r_m = r_{\min(n,m)}$ for every $n,m \in \nat$, and $\ciag r$ converges pointwise to $\id F$.
\end{tw}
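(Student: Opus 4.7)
The plan is to assemble the statement from the three ingredients cited in the sketch. First, apply Proposition \ref{LmfgrYgnX} to represent $F$ as the inverse limit of a sequence $\invsys F q \omega n m$ of finite geometric fans whose bonding maps $\map{q_n^{n+1}}{F_{n+1}}{F_n}$ are affine and level-preserving; in particular they are 1-Lipschitz, since a level-preserving affine map between geometric subfans of $V$ does not increase the Euclidean distance inherited from the ambient plane.

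Second, apply Lemma \ref{LmRaTxGh} to each bonding map to produce stable embedding sections $\map{e_n^{n+1}}{F_n}{F_{n+1}}$ satisfying $q_n^{n+1} \cmp e_n^{n+1} = \id{F_n}$. This shows that the bonding maps of $\vec F$ are right-invertible, so Remark \ref{RemDwaSzec} applies and converts the inverse sequence into a chain $F_0 \subs F_1 \subs F_2 \subs \dots$ sitting inside a common ambient space, canonically identified with $F$, with limiting projections $\map{r_n}{F}{F_n}$ satisfying $r_n \cmp r_m = r_{\min(n,m)}$ for all $n,m \in \nat$ and $r_n \to \id F$ pointwise; these are exactly the conclusions of Lemma \ref{Lembgerg} in reverse.

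It remains to verify the two additional requirements. The inclusion $E(F_n) \subs E(F)$ follows from stability: each $e_n^{n+1}$ maps end-points to end-points, so by induction every composed embedding $e_n^m = e_{m-1}^m \cmp \dots \cmp e_n^{n+1}$ does the same, and hence so does the identifying embedding of $F_n$ into the inverse limit $F$. The 1-Lipschitz character of each $r_n$ follows because $r_n$, read as a self-map of $F$, is the composition of the projection $\map{q_n}{F}{F_n}$ with the inclusion $F_n \hookrightarrow F$; the former is a pointwise limit of compositions of 1-Lipschitz level-preserving bonding maps, while the latter is level-preserving affine and therefore isometric along each spike.

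The main obstacle, essentially organizational, is to ensure that the sections produced by Lemma \ref{LmRaTxGh} can be assembled coherently enough to realize all $F_n$ as subfans of a single ambient copy of $F$; this is precisely the content of Remark \ref{RemDwaSzec}. One then has only to observe that, because every map in sight is level-preserving and affine, the resulting retractions inherit these properties and in particular remain 1-Lipschitz.
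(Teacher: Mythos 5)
Your proposal is correct and follows exactly the route the paper takes: its proof of Theorem \ref{ThmMejnRrr} is literally the one-line combination of Proposition \ref{LmfgrYgnX}, Lemma \ref{LmRaTxGh} and Remark \ref{RemDwaSzec}, and you assemble the same three ingredients with the details spelled out. The only step worth tightening is the deduction of $E(F_n)\subseteq E(F)$: stability of the sections only gives $E(F_n)\subseteq E(F_m)$ for $m\geq n$, and to pass to the limit one should note that if $x=\lambda y$ with $y\in F$ and $\lambda\in[0,1)$ then $x=r_n(x)=\lambda r_n(y)$ with $r_n(y)\in F_n$ (since $r_n$ is affine and fixes $F_n$), contradicting $x\in E(F_n)$ --- or simply recall that in the construction preceding Proposition \ref{LmfgrYgnX} the end-points $a_U$ of each finite fan are chosen inside $E(F)$ from the outset.
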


\subsection{Construction and properties of the Lelek fan}

We now have all the necessary tools for describing the Lelek fan and its properties.
Recall that each finite geometric fan $F$ has a fixed metric $d_F$, defined as the shortest path between two points, assuming that each spike has the length inherited from its fixed embedding into the Euclidean space.

Let $\Fans$ be the category whose objects are finite geometric fans and an arrow from $F$ to $G$ is a pair $\pair e p$ such that $\map e F G$ is a stable embedding, $\map p G F$ is a 1-Lipschitz affine retraction and $p \cmp e = \id F$.
The composition $\pair e p \cmp \pair {e'}{p'}$ is $\pair {e \cmp e'}{p' \cmp p}$.
An immediate consequence of Theorem~\ref{ThmMejnRrr} is

\begin{wn}\label{geoinv}
	Every geometric fan is the inverse limit of a sequence in $\Fans$.
\end{wn}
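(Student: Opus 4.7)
The plan is to take the chain produced by Theorem~\ref{ThmMejnRrr} and repackage it as an $\Fans$-sequence, then apply Lemma~\ref{Lembgerg} (together with Remark~\ref{RemDwaSzec}) to identify the given fan with the inverse limit.

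More concretely, let $F$ be a geometric fan and apply Theorem~\ref{ThmMejnRrr} to obtain a chain $F_0 \subs F_1 \subs F_2 \subs \dots \subs F$ of finite fans with $E(F_n) \subs E(F)$, together with $1$-Lipschitz affine retractions $r_n \colon F \to F_n$ satisfying $r_n \cmp r_m = r_{\min(n,m)}$ and $r_n \to \id_F$ pointwise. For each $\ntr$ let $e_n \colon F_n \to F_{n+1}$ be the inclusion and set $p_n := r_n \rest F_{n+1} \colon F_{n+1} \to F_n$. I claim that the pair $\pair{e_n}{p_n}$ is an arrow in $\Fans$, so that $\invsys{F}{\pair{e}{p}}{\omega}{n}{m}$ is an inverse sequence in $\Fans$.

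The three verifications are straightforward. First, $e_n$ is a stable embedding: an end-point $x$ of $F_n$ lies in $E(F)$ by hypothesis, and since $F_{n+1}$ is a finite fan whose spikes end at points of $E(F)$, the point $x$ cannot lie strictly below the tip of a spike of $F_{n+1}$, so $x \in E(F_{n+1})$. Second, $p_n$ is $1$-Lipschitz and affine as a restriction of $r_n$, and from $r_n \cmp r_n = r_n$ we see that $r_n$ fixes $F_n$, so $p_n$ indeed retracts $F_{n+1}$ onto $F_n$. Third, $p_n \cmp e_n = r_n \rest F_n = \id_{F_n}$ for the same reason. The only mild subtlety is stability of the embeddings, which is an essentially geometric observation about what it means to be an end-point in a finite fan.

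Finally, Lemma~\ref{Lembgerg} applied to the sequence $(r_n)$ of retractions on $F$ shows that $F$ is homeomorphic to the inverse limit of the sequence $\ciag F$ with bonding maps $r_n \rest F_{n+1} = p_n$. Since these bonding maps are precisely the projections of the $\Fans$-arrows $\pair{e_n}{p_n}$ constructed above, $F$ is the inverse limit of a sequence in $\Fans$, as required.
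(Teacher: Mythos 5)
Your proof is correct and follows essentially the same route as the paper: the paper states this corollary as an immediate consequence of Theorem~\ref{ThmMejnRrr}, and you have simply spelled out the routine verifications (stability of the inclusions via $E(F_n)\subs E(F)$, the retraction and identity conditions, and the identification of $F$ with the inverse limit via Lemma~\ref{Lembgerg}) that the paper leaves implicit.
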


\begin{lm}
The category $\Fans$ has the strict amalgamation property.
\end{lm}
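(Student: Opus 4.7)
The plan is to construct $W$ as a combinatorial ``pushout'' $A \sqcup_C B$, taking advantage of the fact that each $\Fans$-arrow is determined by discrete-plus-scalar data on end-points.

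First I would describe a $\Fans$-arrow $\langle e, p \rangle \colon F \to G$ by three pieces of data: an injection $\phi \colon E(F) \to E(G)$ encoding the stable embedding via $e(\mu c) = \mu \phi(c)$; a function $\psi \colon E(G) \to E(F) \cup \{0\}$; and scalars $\lambda_g \in [0,1]$ for $g \in E(G)$, so that $p(\mu g) = \mu \lambda_g \psi(g)$ (read as $0$ when $\psi(g)=0$). The requirements $p \circ e = \id{F}$ and $1$-Lipschitzness of $p$ translate on each spike of $G$ into $\psi \circ \phi = \id{E(F)}$, $\lambda_{\phi(c)} = 1$ for $c \in E(F)$, and $\lambda_g\,\rho(\psi(g)) \leq \rho(g)$; these per-spike inequalities yield the global Lipschitz bound via the shortest-path form of $d_G$.

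Given $\langle e_A, p_A \rangle \colon C \to A$ and $\langle e_B, p_B \rangle \colon C \to B$, encoded by $(\phi_A, \psi_A, \lambda^A)$ and $(\phi_B, \psi_B, \lambda^B)$, I would take $E(W)$ to be the disjoint union of $\{w_c : c \in E(C)\}$, $\{w_a : a \in E(A) \setminus \phi_A(E(C))\}$, and $\{w_b : b \in E(B) \setminus \phi_B(E(C))\}$, realizing $W$ in $V$ by assigning a distinct direction in $\Cantor$ to each. The levels are chosen as
\[
\rho(w_c) = \max\bigl(\rho(\phi_A(c)),\, \rho(\phi_B(c))\bigr), \qquad \rho(w_a) = \max\bigl(\rho(a),\, \lambda^A_a\, \rho(\phi_B(\psi_A(a)))\bigr),
\]
and $\rho(w_b)$ by the symmetric formula (with the second summand read as $0$ whenever $\psi_A(a)=0$ or $\psi_B(b)=0$). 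I would then define the stable embeddings $e'_A, e'_B$ of $A$, $B$ into $W$ on end-points by $\phi_A(c) \mapsto w_c$, $a \mapsto w_a$ and $\phi_B(c) \mapsto w_c$, $b \mapsto w_b$. The retractions $p'_A$ and $p'_B$ send $w_c$ to $\phi_A(c)$, resp.\ $\phi_B(c)$, each with scalar $1$; $w_a$ to $a\in A$ with scalar $1$ and to the ``shadow'' $\lambda^A_a\,\phi_B(\psi_A(a)) \in B$ with scalar $\lambda^A_a$; and symmetrically for $w_b$.

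The remaining work is routine verification: $p'_A \circ e'_A = \id{A}$ and $p'_B \circ e'_B = \id{B}$ by inspection; $e'_A \circ e_A = e'_B \circ e_B$, since both send $c \mapsto w_c$ and are affine; and $p_A \circ p'_A = p_B \circ p'_B$, the crucial case being $w_a$, where both sides evaluate to $\lambda^A_a\,\psi_A(a) \in C$. The $1$-Lipschitz conditions for $p'_A$ and $p'_B$ reduce spike-by-spike to exactly the inequalities that the maxima in the level formulas were designed to enforce. The main (and really the only) obstacle is noticing that the naive choice $\rho(w_a) = \rho(a)$ fails: the compatibility $p_B \circ p'_B = p_A \circ p'_A$ forces $p'_B(w_a) = \lambda^A_a\,\phi_B(\psi_A(a))$, whose level may exceed $\rho(a)$ and violate the Lipschitz bound on $p'_B$; enlarging $\rho(w_a)$ via the maximum resolves this.
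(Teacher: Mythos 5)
Your construction is correct, and at bottom it is the same pushout idea as the paper's proof: glue the spikes of $A$ and $B$ along (the images of) $C$ and define the two retractions piecewise, sending the $A$-part of $W$ identically onto $A$ and the $B$-part onto $A$ through the given data on $C$, and symmetrically. The paper phrases this by normalizing to $C=A\cap B$, taking $W=A\cup B$, and setting $r'\rest B=s$, $r'\rest A=\id{A}$, etc. What your version genuinely adds is the level bookkeeping. In $\Fans$ only the retraction of an arrow is required to be $1$-Lipschitz, so the stable embeddings $e_A\colon C\to A$ and $e_B\colon C\to B$ may stretch corresponding spikes of $C$ by different factors; in that case $A$ and $B$ cannot be realized inside $V$ so that $C$ is literally $A\cap B$, and since isomorphisms of $\Fans$ are isometries one cannot rescale $A$ or $B$ to force this. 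The paper's reduction quietly passes over this case, whereas your maxima $\rho(w_c)=\max\bigl(\rho(\phi_A(c)),\rho(\phi_B(c))\bigr)$ and the analogous formulas for $\rho(w_a)$, $\rho(w_b)$ are exactly what is needed to keep both $p'_A$ and $p'_B$ $1$-Lipschitz while the square commutes strictly; your identification of the ``shadow'' obstruction as the only real issue is accurate. One caveat: your per-spike Lipschitz criterion $\lambda_g\,\rho(\psi(g))\le\rho(g)$ tacitly assumes the planar embedding of $V$ is normalized so that arc length along a spike is proportional to the level with the same constant for all spikes; this is harmless and the paper makes the same implicit assumption (for instance when asserting that level-preserving retractions are $1$-Lipschitz), but it is worth stating once.
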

\begin{proof}
Let $A,B,C$ be finite geometric fans such that $C = A \cap B$.
Suppose also that two 1-Lipschitz affine retractions $\map r A C$, $\map s B C$ are given.
Obviously, $A \cup B$ is a finite geometric fan and there are 1-Lipschitz affine retractions $\map {r'}{A\cup B}{A}$, $\map {s'}{A\cup B}B$ defined by conditions
$$r' \rest B = s, \quad r' \rest A = \id A, \qquad s' \rest A = r, \quad s' \rest B = \id B.$$
Now $f'=\pair {e^{A\cup B}_A}{r'}$ and $g'=\pair {e^{A\cup B}_B}{s'}$
 provide an amalgamation of the pairs $f = \pair {e^A_C}r$ and $g = \pair {e^B_C}s$ in the category $\Fans$, where $e_C^A$, $e_C^B$ denote the inclusions $C \subs A$, $C \subs B$.
\end{proof}

\begin{lm}
$\Fans$ is a separable metric category.
\end{lm}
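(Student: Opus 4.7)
The plan is to exhibit an explicit countable subcategory $\Ef \subs \Fans$ satisfying conditions (1) and (2) of separability; the two metric-category inequalities follow immediately from the 1-Lipschitz property of the retractions in $\Fans$, so I would dispose of them with a single line. Fix a countable dense set $D \subs 2^\omega$ (for concreteness, the sequences eventually equal to $0$). Take the objects of $\Ef$ to be the finite geometric fans whose end-points all lie in $(\Qyu \cap (0,1]) \times D$, and the arrows to be those $(e,p) \in \Fans$ for which the retraction $p$ sends every end-point of its co-domain to a point of its domain of rational level. If $p\colon B \to A$ is such an arrow and $b \in B$ has rational level, then $b = \lambda \cdot (u,v)$ for the unique end-point $(u,v)$ of $B$ on the spike through $b$, with $u$ rational, hence $\lambda$ rational; since $p((u,v))$ also has rational level, so does $p(b) = \lambda \cdot p((u,v))$. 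Composition of two $\Ef$-arrows therefore remains in $\Ef$; identities are clearly in $\Ef$; and $\Ef$ has countably many objects and countably many arrows between any two of them.

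For condition (1), let $X \in \Fans$ have end-points $x_i = (s_i, t_i)$, $i \le k$. Pick rationals $s_i' \in [s_i, 1]$ and pairwise distinct $t_i' \in D$, and let $A \in \Ob(\Ef)$ be the fan with end-points $(s_i', t_i')$. The affine map $e\colon X \to A$ determined by $e(x_i) = (s_i', t_i')$ is a stable embedding, and the affine retraction $p\colon A \to X$ determined by $p((s_i', t_i')) = x_i$ has Lipschitz constant $s_i/s_i' \le 1$ on each spike, so $(e,p)$ is a $\Fans$-arrow from $X$ to $A$.

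For condition (2), take an arrow $f = (e_f, p_f)\colon A \to Y$ with $A \in \Ob(\Ef)$ and $\eps > 0$. Write $E(A) = \setof{a_i = (s_i, t_i)}{i \le k}$ and $E(Y) = \setof{y_j = (u_j, v_j)}{j \le n}$, and for each $i$ let $j(i)$ satisfy $e_f(a_i) = y_{j(i)}$. Construct $B \in \Ob(\Ef)$ with end-points $b_j = (u_j', v_j')$, where $u_j' \in \Qyu \cap [u_j, 1]$ and the $v_j' \in D$ are pairwise distinct, chosen so that $v_{j(i)}' = t_i$ for every $i \le k$; this is possible because the $t_i$ are already pairwise distinct elements of $D$. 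Define $g = (e_g, p_g)\colon Y \to B$ as in condition (1). Finally define $u = (e_u, p_u)\colon A \to B$ by $e_u := e_g \cmp e_f$, $p_u(b_{j(i)}) := a_i$ for each $i$, and $p_u(b_k) := (r_k', \tau_k)$ for indices $k$ not of the form $j(i)$, where $p_f(y_k) = (r_k, \tau_k)$ and $r_k' \in \Qyu \cap [0, r_k]$ is chosen with $r_k - r_k' < \eps$. By construction $u$ lies in $\Ef$. On each spike $[0, b_{j(i)}]$ the maps $p_f \cmp p_g$ and $p_u$ are affine with matching end-point values, so they agree; on each remaining spike $[0, b_k]$ they take values on the common spike of $A$ of direction $\tau_k$, differing by at most $|r_k - r_k'| < \eps$ at every point. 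Hence $d(g \cmp f, u) < \eps$.

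The delicate step is the coordination $v_{j(i)}' = t_i$ when constructing $B$: without it, $e_g \cmp e_f$ could send end-points of $A$ to points of $B$ whose directions do not lie in $D$, forcing $u$ out of $\Ef$ and breaking condition (2). The definition of $\Ef$, insisting that end-points of $\Ef$-objects carry directions in $D$, is precisely what makes this coordination possible.
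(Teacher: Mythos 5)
Your proof is correct and follows the same route as the paper's: the countable subcategory consists of fans with rational end-point levels (the paper gets countability by identifying isometric fans, you by additionally fixing a countable dense set $D$ of directions), and the arrows are those whose retraction sends end-points to rational-level points; the paper stops after exhibiting this family, whereas you go on to verify conditions (1) and (2), and that verification is sound. Two small remarks: the second metric-category inequality $d(h\circ f_0,h\circ f_1)\leq d(f_0,f_1)$ uses surjectivity of the retraction component of $h$ rather than its Lipschitz constant, and the coordination $v'_{j(i)}=t_i$ that you single out as the delicate step is actually unnecessary --- $e_g$ already sends end-points of $Y$ to end-points of $B$, whose directions lie in $D$ by construction, and membership of $u$ in $\Ef$ depends only on $p_u$ taking rational-level values at end-points, so the argument goes through without it.
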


\begin{pf}
Let $\Ef$ consist of all finite fans such that $\rho$ restricted to the end-points has rational values.
Let us consider all $\Fans$-arrows $\pair e p$ such that $p$ maps the end-points into points with rational value of $\rho$ (recall that $\rho(x)$ is the first coordinate of $x$ in $V$).
After identifying isometric fans, this family of objects and arrows is certainly countable.
\end{pf}

By Theorem \ref{fraseq}, $\Fans$ has a \fra\ sequence. The next result characterizes its limit.

\begin{tw}\label{dense}
Let $\vec U$ be a sequence in $\Fans$ and let $U_\infty$ be its inverse limit.
The following properties are equivalent:
\begin{enumerate}
	\item[(a)] The set $E(U_\infty)$ is dense in $U_\infty$.
	\item[(b)] $\vec U$ is a \fra\ sequence.
\end{enumerate}
\end{tw}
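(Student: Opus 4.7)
The plan is to prove the equivalence by a direct construction argument in each direction, exploiting the explicit pairing of stable embeddings and 1-Lipschitz affine retractions that defines arrows in $\Fans$.

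\textbf{Direction (b) $\Rightarrow$ (a).} Fix $x \in U_\infty$ and $\eps > 0$. By Lemma \ref{Lembgerg} the retractions $r_n \colon U_\infty \to U_n$ converge pointwise to the identity, so for large $n$ the point $x$ lies within $\eps/2$ of (the embedded copy of) $\bar x := r_n(x)$. I would build a probe fan $F = U_n \cup [0,a]\in \Fans$ by attaching one new spike whose end-point $a$ has level $\rho(a) = \rho(\bar x)$, paired with the 1-Lipschitz affine retraction $p_F \colon F \to U_n$ that collapses $[0,a]$ isometrically onto $[0, \bar x]$ and fixes $U_n$; the inclusion $e_F \colon U_n \hookrightarrow F$ is stable because $a$ sits on a new spike. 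Applying condition (F) to $f = \pair{e_F}{p_F}$ with tolerance $\eps/2$ yields $m > n$ and $g = \pair{e'}{p'} \colon F \to U_m$ with $d(g \cmp f, u_n^m) < \eps/2$. Stability forces $c := e'(a) \in E(U_m)$; evaluating the metric bound at $c$, using $p'(c)=a$ and $p_F(a)=\bar x$, gives $d_{U_n}(p_n^m(c), \bar x) < \eps/2$. The end-point $c$ propagates through the stable embeddings to $y \in U_\infty$ satisfying $r_k(y) = e_m^k(c)$ for $k \ge m$; I would verify $y \in E(U_\infty)$ by observing that any nontrivial extension $y = \lambda y'$ with $\lambda < 1$ would force $y'_k$ to lie past the end-point $e_m^k(c)$ of its spike in $U_k$, a contradiction. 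Since $r_n(y) = p_n^m(c)$, the triangle inequality gives $y$ within $\eps$ of $x$.

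\textbf{Direction (a) $\Rightarrow$ (b).} Fix a $\Fans$-arrow $f = \pair{e_F}{p_F} \colon U_n \to F$ and $\eps > 0$. Identify $U_n$ with $e_F(U_n) \subseteq F$, so $F = U_n \cup \bigcup_{j=1}^{k}[0,a_j]$ with fresh end-points $a_1,\ldots,a_k$; the 1-Lipschitz hypothesis on $p_F$ forces $\rho(a_j) \ge \rho(p_F(a_j))$. Using density of $E(U_\infty)$, I would select for each $j$ an end-point $y_j \in E(U_\infty)$ with $r_n(y_j)$ within $\eps/2$ of $p_F(a_j)$ and $\rho(y_j) \ge \rho(a_j)$. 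For $m$ large enough, each $y_j$ is realized at stage $U_m$ by an end-point $b_j \in E(U_m)$ lying on a distinct spike, with $p_n^m(b_j)$ close to $p_F(a_j)$ and $\rho(b_j) \ge \rho(a_j)$. I would then define $e' \colon F \to U_m$ to agree with $e_n^m$ on $U_n$ and send each $[0, a_j]$ affinely onto $[0, b_j]$, and $p' \colon U_m \to F$ by $p'(b_j) = a_j$, $p'(e_n^m(q)) = q$ for $q \in E(U_n)$, and $p'(c) = p_n^m(c) \in U_n \subseteq F$ on every remaining end-point $c$, extended affinely on each spike. The level bounds and the 1-Lipschitz property of $p_n^m$ together render $p'$ 1-Lipschitz; $p' \cmp e' = \id F$ is immediate; and $p_F \cmp p'$ matches $p_n^m$ to within $\eps$.

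The main obstacle is the level constraint $\rho(b_j) \ge \rho(a_j)$ in direction (a) $\Rightarrow$ (b): density of $E(U_\infty)$ in the topology of $U_\infty$ yields end-points close in the metric, whose levels are bounded from above in terms of $\rho(p_F(a_j))$ and not from below by $\rho(a_j)$. I would overcome this by working inside the triangular preimage $r_n^{-1}([0, q_j]) \subseteq U_\infty$ above the spike of $p_F(a_j)$ in $U_n$, which is an open sub-fan of $U_\infty$ whose end-points remain dense and whose spikes are stretched by the 1-Lipschitz bondings along the sequence; this furnishes end-points of arbitrarily high available level with the correct projection. A secondary bookkeeping task ensures that the chosen $b_j$ land on distinct spikes of $U_m$, which is arranged by passing to a larger $m$ if needed.
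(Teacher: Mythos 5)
Your direction (b) $\Rightarrow$ (a) has a genuine gap at the final step. A single application of condition (F) gives you $c = e'(a) \in E(U_m)$ with $d_{U_n}(p_n^m(c), r_n(x)) < \eps/2$; writing $y$ for $c$ viewed as a point of $U_\infty$, this controls $d(r_n(y), r_n(x))$ and, by your choice of $n$, $d(r_n(x), x)$ --- but it says nothing about $d(y, r_n(y))$, which is the quantity your ``triangle inequality'' silently requires. The end-point $y$ is only located up to the resolution of $U_n$: all you know is that it sits in the triangle $r_n^{-1}$ of the spike carrying $r_n(x)$, which can be an enormous piece of $U_\infty$ (if $U_n$ is a single spike, it is all of $U_\infty$), so $y$ need not be anywhere near $x$ in the topology of $U_\infty$. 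The paper's proof is built precisely to circumvent this: it applies (F) repeatedly along the sequence, producing end-points $x_k \in E(U_{n_k})$ with $d(r_{n_{k-1}}(x_k), x) < 2^{-k}$, extracts a convergent subsequence with limit $y$, and then proves $y = x$ by showing $r_\ell(y) = r_\ell(x)$ for every $\ell$ (using $r_\ell \cmp r_{n_k} = r_\ell$); the individual approximants may be far from $x$, but their limit cannot be. Some such iteration-plus-compactness argument is indispensable, and your one-shot version does not close. (It also suffices, as in the paper, to treat $x$ in the dense set $\bigcup_m U_m$, which removes the need for the preliminary appeal to pointwise convergence of the $r_n$.)

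Your direction (a) $\Rightarrow$ (b) follows the same route as the paper (approximate the images $p_F(a_j)$ of the new end-points by end-points of $U_\infty$, realize them at a deep enough stage $U_m$, and read off the arrow), and you correctly isolate the delicate point: to make $p'$ 1-Lipschitz on the spike it maps onto $[0,a_j]$ you need $b_j$ to be at least as high as $a_j$, whereas density of $E(U_\infty)$ only supplies end-points whose level approximates $\rho(p_F(a_j)) \le \rho(a_j)$. However, your proposed repair --- that the triangle over the spike of $p_F(a_j)$ ``furnishes end-points of arbitrarily high available level with the correct projection'' --- is asserted rather than proved, and it does not follow from density: the 1-Lipschitz bondings give the lower bound $\rho(w) \gtrsim \rho(r_n(w))$, which is the wrong direction, and for a sequence with level-preserving retractions (the ones produced by Proposition~\ref{LmfgrYgnX}) one has $\rho(r_n(w)) = \rho(w)$ for every $w$, so every end-point projecting near the low point $p_F(a_j)$ is itself low. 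So the obstacle you flag at the outset is still standing at the end of your argument; as written, this half of the proof is incomplete as well.
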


\begin{pf}

{\bf{(a) implies (b)}}
Fix $m\in\omega$ and $\eps>0$.
Fix a $\Fans$-arrow $\map f {U_m}{U}$, with $f = \pair e p$.
It suffices to prove condition (F) in case where $U$ is an extension of $U_m$ by adding a single spike.
All other $\Fans$-arrows are obtained as compositions of such arrows.
Let $v$ be the ``new'' end-point of $U$ and let $x = p(v)$.
There exists a sequence $\ciag x$ of end-points of $U_\infty$ convergent to $x$.
Then $\lim_{k\to\infty} r_m(x_k) = r_m(x) = x$.
As $U_m$ is a finite fan, for $k$ big enough we have that $r_m(x_k)$ is $\eps$-close to $x$ with respect to the metric $d$ defined for finite fans.
Finally, the fan $W$ obtained from $U_m$ by adding the spike with end-point $x_k$ ``realizes'' $U$ with an $\eps$-error.
Making another $\eps$-error, we can assume that $W \subs U_n$ for some $m < n$.

{\bf{(b) implies (a)}}
Fix $x \in U_m$, where $m$ is fixed.
Let $n_0 = m$.
Using condition (F), we find $n_1 > n_0$ and $x_1 \in E(U_{n_1})$ such that $d(r_{n_0}(x_1), x) < 2^{-1}$, where $d$ denotes the ``shortest path" metric.
Again using (F) applied to $U_{n_1}$, we find $n_2 > n_1$ and $x_2 \in E(U_{n_2})$ such that
$d(r_{n_1}(x_2), x) < 2^{-2}$.
We continue this way, ending up with a sequence of end-points $\sett{x_{k}}{k\in\nat}$ such that $\lim_{k\to \infty} r_{n_k}(x_{k+1}) = x$.
Refining this sequence, we may assume that it is convergent, i.e. $\lim_{k\to\infty} x_k = y \in U_\infty$ and $\sett{n_k}{k\in\nat}$ is a strictly increasing sequence of positive integers.
We need to show that $y = x$.

Fix $\ell \in \nat$.
If $n_k>\ell$ then $r_\ell = r_\ell \cmp r_{n_k}$, therefore
$$r_\ell(x) = r_\ell(\lim_{k\to \infty} r_{n_k}(x_{k+1})) = \lim_{k\to\infty}r_\ell(x_{k+1}) = r_\ell(y).$$
This shows that $r_\ell(x) = r_\ell(y)$ for every $\ell \in \nat$.
This is possible only if $x = y$.

Finally, as $\bigcup_{m \in \nat}U_m$ is dense in $U_\infty$, this shows the density of $E(U_\infty)$ in $U_\infty$.
\end{pf}

From now on, $\Lelfan$ will denote the limit of a \fra\ sequence in $\Fans$.

\begin{wn}
$\Lelfan$ is the Lelek fan.
\end{wn}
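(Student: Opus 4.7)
The plan is to verify that $\Lelfan$ has the two properties that characterize the Lelek fan up to homeomorphism: namely, $\Lelfan$ is a smooth fan, and its set of end-points is dense in it. Once both are in hand, the classical characterization of the Lelek fan as the unique smooth fan with a dense set of end-points identifies $\Lelfan$ with the Lelek fan.

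First I would show that $\Lelfan$ is a smooth fan. Let $\vec U = \invsys U u \omega n m$ be a \fra\ sequence in $\Fans$; its existence is guaranteed by Theorem~\ref{fraseq} together with the preceding lemmas establishing separability and amalgamation. Using Remark~\ref{RemDwaSzec}, I would re-realize $\vec U$ as an increasing chain $U_0 \subs U_1 \subs \dots$ sitting inside the ambient Cantor fan $V = \Delta(\Cantor)$; this amounts to an inductive choice, since each stable embedding $U_n \hookrightarrow U_{n+1}$ only attaches new spikes whose end-points may be placed freely in the Cantor-set factor of $V$. Let $K = \Cl{\bigcup_n U_n} \subs V$. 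Each $U_n$ is convex in the sense of the paper, an increasing union of convex sets is convex, and convexity is preserved under closure by continuity of the scalar multiplication $x \mapsto \lam \cdot x$; therefore $K$ is a closed convex subset of $V$, i.e.\ a geometric fan. Assembling the retractions from $\vec U$ into a sequence $r_n \colon K \to U_n$ of 1-Lipschitz affine retractions satisfying the compatibility condition $r_n \cmp r_m = r_{\min(n,m)}$ and pointwise convergent to $\id K$, Lemma~\ref{Lembgerg} identifies $K$ with the inverse limit of $\vec U$, so $K \iso \Lelfan$. Since every geometric fan is smooth, $\Lelfan$ is a smooth fan.

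Second, Theorem~\ref{dense} applied to the \fra\ sequence $\vec U$ immediately yields that $E(\Lelfan)$ is dense in $\Lelfan$. Combining the two steps, $\Lelfan$ is a smooth fan whose set of end-points is dense; the classical uniqueness theorem characterizing the Lelek fan by exactly these two properties then concludes that $\Lelfan$ is homeomorphic to the Lelek fan.

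The main subtlety in this plan is the first step: to conclude that the abstract inverse limit $\Lelfan$ is genuinely a geometric fan, one must realize it concretely as a closed convex subset of $V$, which requires coherently carrying out the chain construction indicated in Remark~\ref{RemDwaSzec}. Once that bookkeeping is in place, the convexity of the closure and the remaining two ingredients (Theorem~\ref{dense} and the classical characterization of the Lelek fan) are immediate.
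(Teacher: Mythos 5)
Your argument is correct, but the final identification step takes a different route from the paper's. You establish, as the paper does, that $\Lelfan$ is a geometric (hence smooth) fan with dense set of end-points --- your explicit realization of the \fra\ sequence as a chain inside $V$ via Remark~\ref{RemDwaSzec} and Lemma~\ref{Lembgerg} is a reasonable way to fill in the (largely implicit) step that the inverse limit of a sequence in $\Fans$ is again a geometric fan. Where you diverge is in then invoking the classical Charatonik/Bula--Oversteegen characterization of the Lelek fan as the unique smooth fan with dense end-points. The paper deliberately does \emph{not} use that external theorem: it identifies $\Lelfan$ with Lelek's fan by running Theorem~\ref{dense} in the other direction --- Lelek's fan is a smooth fan with dense end-points, hence (by Corollary~\ref{geoinv}) the limit of a sequence in $\Fans$, which by Theorem~\ref{dense}(a)$\Rightarrow$(b) is \fra, and \fra\ sequences have unique limits by Theorem~\ref{uniq} --- and then presents the classical uniqueness result as a \emph{corollary} of this machinery in the very next statement. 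Your route is logically sound and shorter, at the price of importing the classical uniqueness theorem as a black box; the paper's route is self-contained within its \fra-theoretic framework and yields a new proof of that uniqueness theorem as a byproduct, which is one of the advertised points of the article.
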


\fra\ theory combined with our geometric theory of finite fans provides a simple proof of the uniqueness result, originally proved by Charatonik~\cite{Char} and independently by Bula \& Oversteegen~\cite{BuOv}.

\begin{wn}[Uniqueness]	
The Lelek fan is a unique, up to homeomorphisms, smooth fan whose set of end-points is dense.
\end{wn}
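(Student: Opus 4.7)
The plan is to combine the three tools already in place: the representation theorem \ref{ThmMejnRrr} (or equivalently Corollary~\ref{geoinv}), the characterization of \fra\ sequences via density of end-points (Theorem~\ref{dense}), and the general uniqueness of \fra\ sequences (Theorem~\ref{uniq}). The uniqueness of $\Lelfan$ should fall out as an immediate consequence with essentially no additional geometric work.

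Concretely, let $L$ be any smooth fan whose set of end-points is dense in $L$. First, I would cite Eberhart's theorem that every smooth fan is homeomorphic to a geometric fan, so without loss of generality we may assume $L$ is a geometric fan embedded in $V = \Delta(\Cantor)$. Next, I would apply Corollary~\ref{geoinv} to obtain a sequence $\vec V = \invsys V v \omega m n$ in $\Fans$ whose inverse limit is $L$. Since $E(L)$ is dense in $L$ by assumption, Theorem~\ref{dense}, part (a)$\Rightarrow$(b), tells us that $\vec V$ is in fact a \fra\ sequence in $\Fans$.

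On the other hand, the Lelek fan $\Lelfan$ is, by definition, the inverse limit of the (by Theorem~\ref{fraseq}, existing) \fra\ sequence $\vec U$ in $\Fans$, and we have already observed that $\Lelfan$ has dense end-points. Theorem~\ref{uniq} then yields an isomorphism of inverse sequences $\vec V \cong \vec U$ in $\Fans$, i.e.\ a cofinal system of commuting $\Fans$-arrows between the two sequences. Passing to inverse limits, this isomorphism of sequences induces a homeomorphism $L \approx \Lelfan$, which is exactly the required uniqueness.

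There is no real obstacle to this argument; the main point is that all of the work has been packaged into the preceding theorems. The only mildly delicate step is the invocation of Eberhart's theorem to reduce from abstract smooth fans to geometric fans, so that Theorem~\ref{ThmMejnRrr} is applicable; and the observation that an isomorphism of $\Fans$-sequences automatically descends to a homeomorphism of their inverse limits, which follows from the fact that each $\Fans$-arrow consists of a stable embedding together with a compatible affine retraction, hence is an arrow in $\rp\Komp$ in the sense of Section~2.
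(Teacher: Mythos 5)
Your proof is correct and is precisely the argument the paper intends: the corollary is stated without an explicit proof because it follows immediately from Eberhart's reduction to geometric fans, Corollary~\ref{geoinv}, Theorem~\ref{dense}, and the uniqueness of \fra\ sequences (Theorem~\ref{uniq}). Your added remark that an isomorphism of $\Fans$-sequences descends to a homeomorphism of inverse limits is a worthwhile explicit check that the paper leaves implicit.
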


Further corollaries, coming from the  \fra\ theory are the following.

\begin{wn}[Universality]\label{CorSDFohe}
For every geometric fan $F$ there are a stable embedding into the Lelek fan $\Lelfan$ and a 1-Lipschitz affine retraction from $\Lelfan$ onto $F$.
\end{wn}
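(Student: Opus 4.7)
\begin{pf}
The proof will be a direct application of Theorem~\ref{univ}. By Corollary~\ref{geoinv}, present $F$ as the inverse limit of a sequence $\vec X = \invsys{X}{x}{\omega}{m}{n}$ in $\Fans$. Since $\vec U$ is the \fra\ sequence in $\Fans$ whose limit is $\Lelfan$, Theorem~\ref{univ} will supply an arrow $f\colon \vec X \to \vec U$. By the definition of a morphism of sequences, this amounts to a strictly increasing function $\sigma\colon \omega \to \omega$ and a family of $\Fans$-arrows $\pair{e_n}{p_n}\colon X_n \to U_{\sigma(n)}$ commuting with the bonding maps on both sides.

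Next, I would pass to inverse limits. The retractions $p_n$ will assemble into a continuous affine map $p_\infty\colon \Lelfan \to F$, and the embeddings $e_n$ into a continuous affine injection $e_\infty\colon F \to \Lelfan$, satisfying $p_\infty \cmp e_\infty = \id F$. It remains to show that $e_\infty$ is a stable embedding and $p_\infty$ is $1$-Lipschitz. Being affine is a pointwise condition on spikes and is preserved under inverse limits of affine maps; the $1$-Lipschitz property of $p_\infty$ follows from that of each $p_n$ together with the fact that the shortest-path metric on $F$ is controlled by its finite approximations $X_n$; and stability of $e_\infty$ holds on $\bigcup_n E(X_n) \subs E(F)$, since each $e_n$ is stable and $E(U_{\sigma(n)}) \subs E(\Lelfan)$.

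The main obstacle I anticipate is verifying stability of $e_\infty$ at those end-points $x \in E(F)$ that are not captured at any finite stage $X_n$. My plan is to address this either by refining $\vec X$, exploiting the flexibility in choosing the triangular decompositions in Proposition~\ref{LmfgrYgnX} so that $E(F)$ is approximated by end-points from the finite stages, or by an affine-rigidity argument: $e_\infty$ maps each spike of $F$ injectively into a subinterval of a spike of $\Lelfan$, and one verifies that this image is the full spike by producing a sequence of end-points from $\bigcup_n E(X_n)$ whose $e_\infty$-images, all end-points of $\Lelfan$, converge to $e_\infty(x)$ along that image spike.
\end{pf}
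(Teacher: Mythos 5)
Your route is exactly the paper's: the corollary is stated there with no separate argument, being derived from Corollary~\ref{geoinv} together with Theorem~\ref{univ} precisely as you propose, so the overall plan is right. The one point worth correcting is your anticipated ``main obstacle.'' Stability of $e_\infty$ is automatic and needs neither of your proposed fixes: if $e_\infty(x)=\lam\cdot y$ for some $y\in\Lelfan$ and $\lam\in[0,1)$, then $x=p_\infty(e_\infty(x))=\lam\cdot p_\infty(y)$ since $p_\infty$ is affine, so $x\notin E(F)$; hence $e_\infty$ maps end-points to end-points. (This is the same observation that drives the proof of Lemma~\ref{LmRaTxGh}.) Your first fix cannot work as stated, since $E(F)$ may be uncountable while $\bigcup_n E(X_n)$ is countable, so no refinement puts every end-point of $F$ into a finite stage; and in your second fix the approximating end-points generally lie on different spikes from $x$, so their images need not converge ``along the image spike.'' One further small caution: since the metric $d$ on arrows is $+\infty$ unless the embedding parts coincide, an arrow of sequences has strictly commuting embeddings and only approximately commuting projections, so $p_\infty$ is obtained as a uniform limit of $1$-Lipschitz affine maps (hence is $1$-Lipschitz affine), while $e_\infty$ is a priori defined on the dense union $\bigcup_n X_n$; its extension to all of $F$ is part of what the limit functor of \cite{KubMF} provides, and the identity $p_\infty\cmp e_\infty=\id F$ then follows by continuity.
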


Corollary \ref{CorSDFohe} says, in particular, that the set of end-points of the Lelek fan is universal for the class of all sets of the form $E(K)$, where $K$ is a geometric fan.
This class was characterized in topological terms by Kawamura, Oversteegen and Tymchatyn \cite{Kawamuraetal}.

Lemma \ref{LmRaTxGh} and Theorem \ref{alhom} imply the following corollary.

\begin{wn}[Almost homogeneity]\label{almhom}
Let $F$ be a finite geometric fan, and let $p_1, p_2\colon \Lelfan\to F$ be continuous affine 1-Lipschitz surjections.
Then for every $\eps>0$  there is a homeomorphism $h\colon\Lelfan\to\Lelfan$ such that for every $x\in \Lelfan$,  $d_F(p_1\circ h(x), p_2(x))<\eps$.
\end{wn}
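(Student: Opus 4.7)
The plan is to encode the two surjections as morphisms into the \fra\ limit $\Lelfan$ and then invoke Theorem~\ref{alhom} directly. First, for each $k\in\{1,2\}$, I apply Lemma~\ref{LmRaTxGh} to the affine surjection $p_k\colon\Lelfan\to F$ to obtain a stable embedding $e_k\colon F\to\Lelfan$ with $p_k\cmp e_k=\id{F}$. Then $f_k:=\pair{e_k}{p_k}$ is a $\Fans$-arrow from $F$ into $\Lelfan$.

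Second, since $\Fans$ has strict (hence almost) amalgamation and is separable, Theorem~\ref{fraseq} furnishes a \fra\ sequence in $\Fans$, whose limit is $\Lelfan$ by Theorem~\ref{dense}. I apply Theorem~\ref{alhom} with $A=B=F$, $f=\id{F}$, $i=f_2$, $j=f_1$; this produces an isomorphism $H\colon\Lelfan\to\Lelfan$ in $\Fans$ such that
\[
d(f_1,\, H\cmp f_2)<\eps.
\]
Any such isomorphism is necessarily of the form $\pair{h}{h^{-1}}$ for some homeomorphism $h\colon\Lelfan\to\Lelfan$.

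Third, I unpack the inequality. Since $H\cmp f_2=\pair{h\cmp e_2}{\, p_2\cmp h^{-1}}$, finiteness of the metric forces $h\cmp e_2=e_1$, and its value becomes
\[
\sup_{y\in\Lelfan} d_F\bigl(p_1(y),\, p_2(h^{-1}(y))\bigr)<\eps.
\]
Substituting $y=h(x)$ yields $d_F(p_1\cmp h(x),\, p_2(x))<\eps$ for every $x\in\Lelfan$, which is the desired conclusion.

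The only subtlety is choosing the direction of application of Theorem~\ref{alhom} (namely which of $p_1,p_2$ plays the role of $i$ and which of $j$) so that after the substitution $y=h(x)$ the inequality takes exactly the form required by the corollary. Otherwise the argument is a routine repackaging: Lemma~\ref{LmRaTxGh} supplies the stable embeddings that convert the surjections into $\Fans$-arrows, and the abstract almost-homogeneity theorem does the rest.
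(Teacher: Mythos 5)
Your proposal is correct and is exactly the argument the paper intends: the paper derives Corollary~\ref{almhom} by citing Lemma~\ref{LmRaTxGh} (to turn the affine $1$-Lipschitz surjections $p_1,p_2$ into $\Fans$-arrows $\pair{e_k}{p_k}$ into the \fra\ limit) together with Theorem~\ref{alhom}, and you have simply written out the unpacking of the metric $d$ and the substitution $y=h(x)$ that the paper leaves implicit. The details you supply, including the observation that finiteness of $d(f_1,H\cmp f_2)$ forces $h\cmp e_2=e_1$ and that an isomorphism in $\rp\Komp$ has the form $\pair{h}{h^{-1}}$, are accurate.
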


In fact a stronger statement is true.
\begin{wn}\label{almhoms}
Let $F$ be a geometric fan (not necessarily finite) and let $p_1, p_2\colon \Lelfan\to F$ be continuous affine 1-Lipschitz 
surjections.
Then for every $\eps>0$  there is a homeomorphism  $h\colon\Lelfan\to\Lelfan$ such that for every $x\in \Lelfan$,  $d_F(p_1\circ h(x), p_2(x))<\eps$.
\end{wn}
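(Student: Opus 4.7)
The plan is to reduce Corollary \ref{almhoms} to the finite case (Corollary \ref{almhom}) by approximating the arbitrary geometric fan $F$ by finite subfans. By Theorem \ref{ThmMejnRrr} applied to $F$, I obtain a chain of finite fans $F_0 \subs F_1 \subs \dots \subs F$ together with $1$-Lipschitz affine retractions $r_n \colon F \to F_n$ converging pointwise to $\id_F$. Since the $r_n$ are all $1$-Lipschitz, hence equicontinuous, and $F$ is compact, the pointwise convergence is actually uniform: for the given $\eps>0$, I can fix $N\in\nat$ such that $d_F(r_N(y),y)<\eps/3$ for every $y\in F$.

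Next, I observe that $q_i \defi r_N\cmp p_i \colon \Lelfan \to F_N$ is a continuous affine $1$-Lipschitz surjection for $i=1,2$, being a composition of such maps, and its image is the finite fan $F_N$. Corollary \ref{almhom} therefore applies to the pair $q_1,q_2$, yielding a homeomorphism $h\colon \Lelfan \to \Lelfan$ such that
\[
d_{F_N}\bigl(q_1\circ h(x), q_2(x)\bigr)<\eps/3 \qquad\text{for every }x\in\Lelfan.
\]
Here I use that the ``shortest path'' metrics $d_{F_N}$ and $d_F$ agree on $F_N$: a shortest path between two points of $F_N$ (either a subsegment of a single spike or the union of two spike segments through the top) is entirely contained in $F_N$ and has the same Euclidean length whether measured in $F_N$ or in $F$.

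Finally, I combine these estimates via the triangle inequality in $(F,d_F)$. For each $x\in\Lelfan$,
\[
d_F\bigl(p_1\circ h(x),p_2(x)\bigr)\loe d_F\bigl(p_1\circ h(x),r_N(p_1\circ h(x))\bigr)+d_F\bigl(r_N(p_1\circ h(x)),r_N(p_2(x))\bigr)+d_F\bigl(r_N(p_2(x)),p_2(x)\bigr),
\]
and the outer two summands are each less than $\eps/3$ by the choice of $N$, while the middle summand equals $d_{F_N}(q_1\circ h(x),q_2(x))<\eps/3$ by the previous paragraph. Summing gives the required bound $d_F(p_1\circ h(x),p_2(x))<\eps$.

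The only slightly delicate point is the uniform convergence $r_n\to\id_F$ and the identification of $d_F$ with $d_{F_N}$ on $F_N$; both are routine consequences of the geometric setup (equicontinuity plus compactness, and the fact that the canonical subfans $F_n$ carry their metric as restrictions of the same Euclidean structure used on $F$). Once these observations are in place, the reduction to Corollary \ref{almhom} is immediate.
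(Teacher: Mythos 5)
Your proof is correct and follows essentially the same route as the paper: approximate $F$ by a finite fan via the canonical $1$-Lipschitz affine retraction onto it (the paper uses the limiting projection $q^\infty_n$ of the inverse sequence from Proposition \ref{LmfgrYgnX}, which is the same map as your $r_N$), apply Corollary \ref{almhom} to the composed projections with tolerance $\eps/3$, and conclude by the triangle inequality. Your added justifications (uniform convergence via equicontinuity and the agreement of $d_{F_N}$ with $d_F$ on $F_N$) are points the paper leaves implicit, but the argument is the same.
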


\begin{proof}
Let ${\triple{{F}_n}{{q}_n^m}{\omega}}$ be an inverse sequence whose inverse limit is $F$, such that all bonding maps are affine and 1-Lipschitz,
constructed exactly as in the proof of Proposition \ref{LmfgrYgnX}. Each $F_n$ we view as a subset of $F$.
Take $n$ such that for every $x\in F$, we have $d_F(q^\infty_n(x), x)\leq \frac{\epsilon}{3}$.
From Corollary \ref{almhom} applied to $ q^\infty_n\circ p_i$ and $\frac{\epsilon}{3}$ we get 
$h\colon\Lelfan\to\Lelfan$ such that for every $x\in \Lelfan$,  $d_{F}(q^\infty_n\circ p_1\circ h(x), q^\infty_n\circ p_2(x))<\frac{\epsilon}{3}$.
This implies $d_F(p_1\circ h(x), p_2(x))<\frac{\epsilon}{3}+\frac{\epsilon}{3}+\frac{\epsilon}{3}=\eps$. 
\end{proof}

\subsection{More properties of the Lelek fan}\label{sub-Lprop}

We shall prove the following two statements:

\begin{tw}\label{Thmerbgwrw}
Let $\map f S T$ be a bijection, such that $S, T \subs E(\Lelfan)$ are finite sets.
Then there exists an affine homeomorphism $\map h \Lelfan \Lelfan$ such that $h \rest S = f$.
\end{tw}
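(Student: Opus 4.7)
The plan is to extend the Fraïssé framework of this paper to a category tracking finite sets of marked end-points, and then apply almost homogeneity there. The key observation is that the arrow metric on $\rp\Komp^d$ assigns distance $+\infty$ whenever the embedding components of two arrows differ; consequently, any finite approximation produced by almost homogeneity forces \emph{exact} equality on the marked points, not merely an $\varepsilon$-approximation. This is precisely what turns the $\varepsilon$-statements into an exact statement about the finite bijection $f$.

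Introduce the subcategory $\Fans^{\bullet}$ of $\rp\Komp^d$ whose objects are pairs $(S,F)$ with $F$ a finite geometric fan and $S\subseteq E(F)$ a finite set, and whose morphisms $(S,F)\to(S',F')$ are pairs $(e,p)$ where $e\colon S\to S'$ is an injection, $p\colon F'\to F$ is a 1-Lipschitz affine retraction, and $p(e(s))=s$ for every $s\in S$. Mirroring the arguments of Section~3 (strict amalgamation of $\Fans$, separability, and the proof of Theorem~\ref{dense}), $\Fans^{\bullet}$ is a directed, separable metric category with the strict amalgamation property, and its Fraïssé sequence has as limit a pair $(D,\bL)$ for some countable dense $D\subseteq E(\bL)$. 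By uniqueness and universality of the limit, I may assume after passing to an isomorphic copy that $S\cup T\subseteq D$.

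Given $f\colon S\to T$, introduce an abstract finite fan $F_0$ with $n=|S|$ spikes, where the $k$-th spike has length $\min(\rho(s_k),\rho(f(s_k)))$, and mark its end-points $\hat s_1,\dots,\hat s_n$. Construct two $\Fans^{\bullet}$-arrows $i,j\colon(\{\hat s_1,\dots,\hat s_n\},F_0)\to(D,\bL)$ as follows: in $i$, let $e_i(\hat s_k)=s_k$, and let $p_i\colon\bL\to F_0$ be a 1-Lipschitz affine retraction built via a triangular decomposition of $\bL$ separating the spikes through $s_1,\dots,s_n$, satisfying $p_i(s_k)=\hat s_k$ for every $k$; define $j$ analogously using $f(s_k)$ in place of $s_k$. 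The min-choice of spike lengths in $F_0$ guarantees $\rho(\hat s_k)\leq\rho(s_k)$ and $\rho(\hat s_k)\leq\rho(f(s_k))$, which is exactly the condition needed for both projections to be 1-Lipschitz.

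Apply the $\Fans^{\bullet}$-analogue of Corollary~\ref{almhom} with $A=B=(\{\hat s_k\},F_0)$ and identity morphism: for every $\varepsilon>0$ there exists an isomorphism $H$ of $(D,\bL)$ with $d(j,H\circ i)<\varepsilon$. Since the arrow metric assigns $+\infty$ to arrows with distinct embedding components, finiteness of $d(j,H\circ i)$ forces the embedding components of $j$ and $H\circ i$ to coincide on $\{\hat s_k\}$; that is, $H(s_k)=f(s_k)$ for every $k$. Viewing $H$ as an affine auto-homeomorphism $h$ of $\bL$ (which it is, as an isomorphism built out of affine morphisms in the category), we obtain the desired extension of $f$. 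The main technical burden is the verification of the Fraïssé properties of $\Fans^{\bullet}$ together with the existence of both retractions $p_i,p_j$ as genuine category morphisms at the same time; the min-choice of spike lengths in $F_0$ is the essential trick, without which a naive attempt using the subfans of $\bL$ spanned by $S$ or by $T$ would fail because an affine map can send a short spike to a longer one but no 1-Lipschitz retraction can go in the reverse direction.
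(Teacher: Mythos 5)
Your central mechanism is sound and is in fact the same one the paper exploits: because the arrow metric takes the value $+\infty$ whenever two arrows have different embedding components, almost homogeneity applied with any finite $\eps$ produces an automorphism matching the marked end-points \emph{exactly}. Your packaging is different from the paper's: you keep all of $S$ at once inside a decorated category $\Fans^{\bullet}$ and compare both markings against an abstract fan $F_0$ with spikes of length $\min(\rho(s_k),\rho(f(s_k)))$, whereas the paper first reduces to $|S|=|T|=1$ (splitting $\Lelfan$ into finitely many Lelek fans glued at the root) and then renormalizes by an affine, deliberately non-1-Lipschitz homeomorphism so that the marked point becomes the end-point of a \emph{longest} spike, after which the machinery of Proposition~\ref{LmfgrYgnX} and Corollary~\ref{almhom} applies directly.

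There is, however, a genuine gap exactly where you place ``the main technical burden,'' and the min-choice of spike lengths does not repair it. You need a $1$-Lipschitz affine retraction $\map{p_i}{\Lelfan}{F_0}$ with $p_i(s_k)=\hat s_k$ \emph{exactly}, where $\hat s_k$ is an end-point of $F_0$. The retraction attached to a triangular decomposition sends the triangle $U\ni s_k$ onto a spike whose end-point sits at level $\rho(U)=\max_{y\in U}\rho(y)$; after rescaling onto $[0,\hat s_k]$ it sends $s_k$ to the point at level $\rho(s_k)\rho(\hat s_k)/\rho(U)$, which is the end-point $\hat s_k$ only when $s_k$ attains the maximal level of its triangle. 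In the Lelek fan this fails generically: every triangle of positive size contains end-points at levels arbitrarily close to its supremum, so the prescribed $s_k$ is typically not maximal. Your min-choice only ensures $\rho(\hat s_k)\le\rho(s_k)$, i.e.\ it controls the spike of $s_k$ itself, not the longer spikes accumulating in the same triangle. The identical difficulty is hidden in the unjustified step ``I may assume $S\cup T\subseteq D$'': to place prescribed points of $E(\Lelfan)$ into the distinguished set of a \fra\ sequence for $\Fans^{\bullet}$ one must exhibit a coherent sequence of $1$-Lipschitz retractions fixing those points, which is precisely the role of Lemma~\ref{swap} and Lemma~\ref{LemCrucill} in the paper. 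The gap is fillable --- either by the paper's ``longest spike'' renormalization, or by replacing the level-preserving retraction on the triangle of $s_k$ by one that scales levels by a continuous factor $c(t)\le\rho(\hat s_k)/\max\setof{\rho(y)}{\pi(y)=t}$ with equality at $\pi(s_k)$, which exists because the spike-length function is upper semicontinuous --- but as written the arrows $i$ and $j$ have not actually been constructed.
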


\begin{tw}\label{ThmIeirgbiurhg}
Let $A, B \subs E(\Lelfan)$ be countable dense sets.
Then there exists an affine homeomorphism $\map h \Lelfan \Lelfan$ such that $\img h A = B$.
\end{tw}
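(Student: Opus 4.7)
The strategy is to lift the \fra\ setup from $\Fans$ to a decorated subcategory $\Fans^d$ of $\rp\Komp^d$ that tracks a distinguished subset of end-points at each finite stage, and then deduce the theorem from uniqueness of \fra\ limits. Declare the objects of $\Fans^d$ to be pairs $(D, F)$ with $F$ a finite geometric fan and $D \subs E(F)$, and an arrow from $(D_F, F)$ to $(D_G, G)$ to be a $\Fans$-arrow $\pair{e}{p}$ satisfying the compatibility $e[D_F] \subs D_G$, with the metric inherited from $\Fans$.

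First I would verify that $\Fans^d$ is a directed, separable metric category with the strict amalgamation property. Directedness and strict amalgamation reduce to the corresponding facts for $\Fans$: the amalgam $W = A \cup B$ of $A$ and $B$ over $C$, equipped with the distinguished set $D_W = D_A \cup D_B$, witnesses strict amalgamation in $\Fans^d$. Separability is inherited from $\Fans$, using that each $D$ is finite so the countable dense subfamily of $\Fans$ still enumerates all possible decorations.

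The key step is a decorated analog of Theorem \ref{dense}: a sequence $\seq{(D_n, U_n)}$ is \fra\ in $\Fans^d$ if and only if $D_\infty := \bigcup_n e_n^\infty[D_n]$ is a dense subset of $U_\infty$. The direction ``\fra\ $\Rightarrow$ density'' adapts the (b)$\Rightarrow$(a) argument of Theorem \ref{dense} verbatim, once one observes that condition (F) in $\Fans^d$ may be witnessed by extensions that adjoin a single \emph{distinguished} spike. The converse adapts the (a)$\Rightarrow$(b) argument: when approximating an arrow $\pair{e}{p}\colon (D_{U_m}, U_m) \to (D_F, F)$ that extends $U_m$ by a new distinguished end-point $v \in D_F$, one chooses the approximating new end-point of $U_n$ to lie in $D_\infty$ (possible by density), which automatically enforces $e[D_F] \subs D_{U_n}$.

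Finally, given a countable dense set $A \subs E(\Lelfan)$, I would enumerate $A = \setof{a_k}{k \in \nat}$ and inductively construct a \fra\ sequence $\seq{U_n}$ in $\Fans$ sitting inside $\Lelfan$ with $E(U_n) \subs A$ and $\setd{a_0}{a_n} \subs E(U_n)$, interleaving at each stage a standard $\Fans$-\fra\ extension (whose new end-points can be chosen from $A$, because $A$ is dense in $\Lelfan$) with adjoining the next $a_k$. Setting $D_n = E(U_n)$ yields a \fra\ sequence in $\Fans^d$ whose limit is $(\Lelfan, A)$; the analogous construction works for $B$. Theorem \ref{uniq} applied to $\Fans^d$ produces an isomorphism of these two limits, which at the level of spaces is an affine auto-homeomorphism $h\colon \Lelfan \to \Lelfan$ with $h[A] = B$. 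The main obstacle I expect is the decorated analog of Theorem \ref{dense}, and in particular arranging the construction from $A$ so that the sequence simultaneously remains $\Fans$-\fra\ and has every newly introduced end-point lying in $A$; this requires careful bookkeeping but no genuinely new ideas beyond the density of $A$.
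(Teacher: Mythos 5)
Your overall architecture --- decorate the category so that arrows track distinguished end-points, show the decorated sequences built from $A$ and from $B$ are both \fra, and invoke uniqueness of \fra\ limits --- is exactly the paper's strategy (the paper's $\Fans^d$ takes $F^1=F^2$ at the finite level, which amounts to your choice $D_n=E(U_n)$). The gap is in your last step, where you assert that one can ``inductively construct a \fra\ sequence $\seq{U_n}$ in $\Fans$ sitting inside $\Lelfan$ with $E(U_n)\subs A$ \dots\ whose limit is $(\Lelfan, A)$'', and that this needs only ``careful bookkeeping''. A sequence in $\Fans$ is not just a chain of subfans: it comes with 1-Lipschitz affine retractions $\map{r_n}{\Lelfan}{U_n}$ satisfying $r_n\cmp r_m=r_{\min(n,m)}$ and converging pointwise to the identity, and these are what make the inverse limit equal to $\Lelfan$. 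The only available construction of such retractions (Proposition~\ref{LmfgrYgnX}) is via triangular decompositions with \emph{level-preserving} retractions, and this forces each chosen end-point $a_U$ to satisfy $\rho(a_U)=\rho(U)$, i.e.\ to realize the \emph{maximal} level of its triangle. An arbitrary countable dense $A\subs E(\Lelfan)$ need not contain, for a given triangle $U$, any point at level $\rho(U)$; density of $A$ only gives points at levels arbitrarily close to, but possibly strictly below, the maximum. If $\rho(a_U)<\rho(U)$ the affine retraction onto the spike $[0,a_U]$ must shrink the longer spikes of $U$, and one then has to argue separately that the resulting system still telescopes and still converges to the identity --- this is precisely where the argument breaks if left as ``bookkeeping''.

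The paper's resolution of this obstruction is its Lemma~\ref{swap} together with the crucial Lemma~\ref{LemCrucill}: instead of changing the retractions, one changes the fan, constructing an affine (not 1-Lipschitz) self-homeomorphism of $\Lelfan$ onto a copy $\Lelfan'$ that contracts each triangle just enough that the designated points of $A$ \emph{do} realize the maximal levels of the triangles containing them; the condition $\prod_n b_n>0$ on the contraction factors is what keeps the limit map injective. The theorem is then proved for $(\Lelfan',h'(A))$ and $(\Lelfan'',h''(B))$ and transported back through $h'$ and $h''$. So your proposal is missing a genuine idea, not just details: without Lemma~\ref{swap} (or some substitute for it) the sequence you want to build from $A$ need not exist inside $\Lelfan$ itself, only inside an affinely homeomorphic copy.
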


The first result is rather well-known. 
A weakening of Theorem \ref{ThmIeirgbiurhg} was stated in  \cite[Thm. 12]{Kawamuraetal}, where the authors showed that the space of end-points of
the Lelek fan is  countably dense homogeneous.
We shall obtain both of these results from the general \fra\ theory, adding some extra work.

Given a sequence $\vec F$ in $\Fans$, we shall denote by $\liminv \vec F$ the inverse limit of the 1-Lipschitz affine retractions associated with $\vec F$.
We already know that $\liminv \vec F$ is a geometric fan and every geometric fan is of the form $\liminv \vec F$ for some sequence $\vec F$ in $\Fans$.
Given a sequence $\vec F$, let us denote by $\limind \vec F$ the union $\bigcup_{\ntr} F_n$, which is a dense subset of $\liminv \vec F$ consisting of countably many spikes.

\begin{pf}[Proof of Theorem~\ref{Thmerbgwrw}]
It will suffice to prove the theorem for $S$ and $T$ which are one-element sets, as then we  deduce the theorem in  a general case by decomposing the Lelek
fan into finitely many disjoint spaces homeomorphic to the Lelek fan, with their roots identified. 

Choose an affine (but not necessarily 1-Lipschitz) homeomorphism $h_S\colon \Lelfan\to \Lelfan'$ and  $h_T\colon \Lelfan\to \Lelfan''$ such that $h_S(S)$ is the 
endpoint of a longest spike in $\Lelfan'$
and  $h_T(T)$ is the endpoint of a longest spike in $\Lelfan''$. As in the proof of Proposition \ref{LmfgrYgnX}, we can find
sequences $\vec A$, $\vec B$ in $\Fans$ such that  $\Lelfan' = \liminv \vec A $ and $\Lelfan''= \liminv \vec B$, and additionally
$A_0 = h_S(S)$, $B_0 = h_T(T)$. By Theorem \ref{dense}, both sequences are Fra\"{i}ss\'e.
Corollary \ref{almhom}  applied to any $\eps >0$ gives an  affine isomorphism $f$ of $\Lelfan'$  and $\Lelfan''$ that carries $h_S(S)$ to $h_T(T)$. The homeomorphism $(h_T)^{-1}\circ f\circ h_S$ is as required.
\end{pf}

Let $F$ be a geometric fan.
A \emph{skeleton} in $F$ is a convex set $D \subs F$ such that $E(D)$ is countable, contained in $E(F)$ and dense in $E(F)$.

Let us consider the following generalization of the category $\Fans$.
Namely, let $\Fans^d$ be the category whose objects are pairs of  finite geometric fans $(F^1,F^2)$ with $F^1=F^2$,
and an arrow from $(F^1,F^2)$ to $(G^1,G^2)$ is a pair $\pair e p$ such that $e\colon F^1\to G^1$ is a stable embedding, 
$ p\colon G^2\to  F^2$ is a 1-Lipschitz affine retraction and $p \cmp e = \id F$.
Then $\Fans^d$ is a separable metric category, therefore it has a unique up to isomorphism \fra\ sequence.
Its \fra\ limit is $(D, \Lelfan)$ for some skeleton $D$ in $\Lelfan$.
To prove Theorem \ref{ThmIeirgbiurhg}, we will show the following crucial lemma, an analog for $\Fans^d$ of Corollary \ref{geoinv}.

\begin{lm}\label{LemCrucill}
Let $L$ be a geometric fan and let $D$ be a skeleton in $L$.
Then there exist a geometric fan $L'$, a skeleton $D'$ of $L'$, and 
an affine (not necessarily 1-Lipschitz) homeomorphism $h\colon L\to L'$ with $h(D)=D'$ such that 
there is a sequence $\vec F$ in $\Fans^d$ satisfying  $L' = \liminv \vec F$ and $D' = \limind \vec F$.
\end{lm}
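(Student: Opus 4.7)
My plan is to construct simultaneously an affine homeomorphism $h\colon L\to L'$ that rescales each spike of $L$ and an inverse sequence $\vec F$ in $\Fans^d$, so that $h(D)=D'=\bigcup_n F^1_n$ and $L'=\liminv F^2_n$; the rescaling is designed to make each $h(d_n)$ an exact maximum of the new spike-length function on a specified shrinking clopen neighborhood, which then lets the $F^2_n$'s be built by the standard triangular-decomposition construction of Proposition~\ref{LmfgrYgnX} with end-points chosen from $D'$.

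View $L\subseteq V=\Delta(\Cantor)$ as determined by an upper semicontinuous spike-length function $\sigma\colon T\to[0,1]$, where $T=\pi(L)$; thus $L=\{(r,t):t\in T,\ 0\le r\le \sigma(t)\}$. Enumerate $E(D)=\{d_n\}_{n\in\omega}$ with $d_n=(\sigma(t_n),t_n)$; density of $E(D)$ in $E(L)$ gives that $\Delta:=\{t_n\}$ is dense in $T^+:=\{t:\sigma(t)>0\}$. First I choose a refining sequence of clopen partitions $\Yu_n$ of $T$, with mesh tending to $0$, so that $\Yu_n$ separates $t_0,\dots,t_n$ into distinct cells $W^n_0,\dots,W^n_n$ and every cell of $\Yu_n$ meets $\Delta$ (possible by density).

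The technical heart of the proof is the construction of a continuous bounded function $\lambda\colon T^+\to(0,1]$ with $\lambda(t_n)=1$, such that $\sigma':=\lambda\sigma$ (extended by $0$) is upper semicontinuous on $T$, and for every $n$ and every $i\le n$ one has $\sigma'(t_i)=\max_{t\in W^n_i}\sigma'(t)$. The key input is upper semicontinuity of $\sigma$: the factor needed to force $t_i$ to become the maximum on $W^n_i$ is essentially $\min(1,\sigma(t_i)/\sigma(t))$, which, because $\limsup_{t\to t_i}\sigma(t)\le\sigma(t_i)$, tends to $1$ as $t\to t_i$ and is therefore compatible with continuity of $\lambda$ at $t_i$. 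One assembles these stagewise local constraints into a single continuous $\lambda$ by an inductive infimum-type construction, nested with the refinement of the partitions.

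Given such a $\lambda$, I set $L'=\{(r,t)\in V:0\le r\le\sigma'(t)\}$ and define $h(r,t)=(\lambda(t)r,t)$, $h(0)=0$; then $h\colon L\to L'$ is an affine homeomorphism, and $D':=h(D)$ is a skeleton of $L'$. For each $n$, let $F^2_n\subseteq L'$ be the finite fan whose spikes $[0,a^n_W]$ are indexed by $W\in\Yu_n$, with $a^n_W\in E(L')$ attaining the maximum of $\sigma'$ over $W$; by construction one may take $a^n_{W^n_i}=h(d_i)$ for $i\le n$. Let $F^1_n\subseteq F^2_n$ be the sub-fan through $h(d_0),\dots,h(d_n)$. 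Then $F^1_n\hookrightarrow F^1_{n+1}$ is a stable embedding, and the level-preserving affine retraction $F^2_{n+1}\to F^2_n$ induced by $\Yu_{n+1}$ refining $\Yu_n$ is $1$-Lipschitz (since the maximum of $\sigma'$ over a sub-cell is at most that over its parent). These assemble into $\Fans^d$-arrows, and by Lemma~\ref{Lembgerg}, $L'=\liminv\vec F$ while $\bigcup_n F^1_n=D'=\limind\vec F$. The main obstacle is the construction of $\lambda$: density of $\Delta$ rules out taking disjoint neighborhoods of distinct $t_n$'s, so the scaling constraints at different stages interact, and combining them into a single globally continuous $\lambda$ requires careful inductive bookkeeping; once $\lambda$ is in place, the remaining verifications are routine.
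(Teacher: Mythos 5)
Your strategy---rescale each spike of $L$ by a positive factor $\lambda(t)$, continuous on $T^+$, so that the images of designated skeleton end-points become exact level-maxima of the cells of a refining sequence of triangular decompositions, and then run the construction of Proposition~\ref{LmfgrYgnX} with those points as the chosen end-points $a_U$---is essentially the paper's. But two things keep the proposal from being a proof. The first is a genuine gap in what you force: you only arrange the max condition at $t_0,\dots,t_n$ on their own cells $W^n_0,\dots,W^n_n$, and on every other cell of $\Yu_n$ you take an arbitrary end-point of $L'$ attaining $\max_W\sigma'$. Such a point need not lie in $D'$, so $E(F^2_n)\not\subseteq E(D')$, and you are forced to take $F^1_n$ a \emph{proper} sub-fan of $F^2_n$. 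That is not an object of $\Fans^d$, whose objects are pairs with $F^1=F^2$; the lemma asks for a sequence in $\Fans^d$, and the application (Theorem~\ref{dense2} together with uniqueness of \fra\ sequences in $\Fans^d$) needs precisely that $D'=\bigcup_n F_n$ with $F^1_n=F^2_n$. The repair is to designate, in \emph{every} cell of \emph{every} $\Yu_n$, a point of $\Delta$ whose $\sigma$-ratio to the cell's maximum exceeds $b_n$ (such a point exists by density of $E(D)$ in $E(L)$), to keep $t_0,\dots,t_n$ among the stage-$n$ designees, to keep a point designated for the subcell containing it at all later stages so that later rescalings fix it, and to force all of these to become level-maxima. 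This is what the paper does.

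The second problem is that the construction of $\lambda$, which you yourself flag as the heart of the matter, is not carried out, and the sketch has a flaw: the factor $\min\bigl(1,\sigma(t_i)/\sigma(t)\bigr)$ is in general \emph{not} continuous in $t$, because $\sigma$ is only upper semicontinuous (for a fan with dense end-points it is discontinuous at every point of $T^+$); continuity at $t_i$ alone is not enough. The paper's Lemma~\ref{swap} supplies the missing device: on the cell of a designated point $e$ one rescales by a factor that is locally constant on the annuli $V_j\setminus V_{j+1}$ of a sequence of triangles shrinking to $[0,e]$, equal to $\rho(e)/\rho(V_j)$ there; this is continuous, fixes $e$, makes $e$ the level-maximum of its cell, and shrinks nothing by a factor worse than the prescribed $b$. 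The global $\lambda$ is then the infinite composition of these per-stage rescalings, with $\prod_n b_n>0$ guaranteeing injectivity of the limit (no spike collapses to the root) and uniform convergence giving continuity; Lemma~\ref{Lembgerg} is applied only after this. Your ``inductive infimum-type construction'' would have to reproduce exactly this mechanism; as written it is a statement of intent rather than an argument.
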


We postpone for a moment the proof Lemma \ref{LemCrucill} and first show how to derive Theorem \ref{ThmIeirgbiurhg}.
The proof of the following theorem goes along the lines of the first part of the proof of Theorem \ref{dense}.

\begin{tw}\label{dense2}
Let $\vec U$ be a sequence in $\Fans^d$ such that $(D,\Lelfan)$ is its inverse limit, where $D$ is a skeleton of $\Lelfan$. Then
 $\vec U$ is a \fra\ sequence.
\end{tw}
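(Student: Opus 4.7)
The plan is to adapt the $(a)\Rightarrow(b)$ direction of Theorem~\ref{dense} almost verbatim, with the skeleton density serving as a substitute for the density of $E(\Lelfan)$ in $\Lelfan$ whenever the spike in question is marked. I will fix $m\in\omega$, $\eps>0$, and a $\Fans^d$-arrow $f=\langle e_f,p_f\rangle\colon U_m\to F$; decomposing $f$ into single-spike extensions, I reduce to the case where $F=(F^1,F^2)$ extends $U_m$ by adjoining one new spike $[0,v]$. This new spike is either \emph{marked} (so $v\in F^1\setminus e_f(U_m^1)$ and $F^1$ gains the spike too) or \emph{unmarked} (so $F^1=e_f(U_m^1)$).

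Writing $x=p_f(v)\in U_m^2$ and denoting by $r_m\colon\Lelfan\to U_m^2$ the limiting retraction, I construct the required arrow $g=\langle e_g,p_g\rangle\colon F\to U_n$ exactly as in Theorem~\ref{dense}: $e_g$ sends $v$ to some end-point $y$ of $U_n^2$ (in the marked case, to an end-point of $U_n^1$) and extends $e_m^n$ on $e_f(U_m^1)$, while $p_g$ collapses the spike $[0,y]$ linearly onto $[0,v]$ and sends the remaining new spikes of $U_n^2$ through $p_m^n$ so that $p_f\circ p_g=p_m^n$ on them. A short calculation, following the one in Theorem~\ref{dense}, reduces the inequality $d(g\circ f,u_m^n)<\eps$ to the single estimate $d_{U_m^2}(p_m^n(y),x)<\eps$.

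Everything thus comes down to producing, for arbitrarily small $\eps$, an end-point $y$ of $U_n^2$ (respectively of $U_n^1$) whose $p_m^n$-image lies within $\eps$ of $x$. In the unmarked case this is the same approximation argument as in Theorem~\ref{dense}: approximate $x$ by $r_m(z)$ for some $z\in E(\Lelfan)$ and then approximate $z$ by an end-point of $U_n^2$ at a sufficiently late stage. The place where the present setting goes beyond Theorem~\ref{dense}, and the main obstacle, is the marked case, in which the approximating end-point must land inside the marked layer $U_n^1$. This is resolved exactly by the skeleton hypothesis: $E(D)=\bigcup_n E(U_n^1)$ is by definition dense in $E(\Lelfan)$, and therefore in $\Lelfan$, so the same sequence extraction can be carried out entirely within $D$. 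The rest of the verification of condition (F) is then routine bookkeeping on the two-layer structure of $\Fans^d$.
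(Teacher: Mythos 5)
Your proposal is correct and follows essentially the same route as the paper, whose own proof consists only of the remark that the argument goes along the lines of the first part of the proof of Theorem~\ref{dense}; the key point you isolate --- that the skeleton hypothesis makes $E(D)=\bigcup_n E(U_n^1)$ dense in $E(\Lelfan)$ and hence in $\Lelfan$, so the approximating end-point can be chosen already at a finite stage $U_n$ of the sequence --- is exactly the modification needed. Note only that in the paper's definition of $\Fans^d$ every object satisfies $F^1=F^2$ (the two layers separate only in the limit), so your ``unmarked'' case is vacuous and your ``marked'' argument is in fact the entire proof.
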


\begin{pf}[Proof of Theorem~\ref{ThmIeirgbiurhg}]
Choose $A'$, $\Lelfan'$, and $h'\colon \Lelfan\to \Lelfan'$ with $h'(A)=A'$, and choose
 $B''$, $\Lelfan''$, and $h''\colon \Lelfan\to \Lelfan''$ with $h''(B)=B''$, as in  Lemma~\ref{LemCrucill}.
Then there are two sequences $\vec F$, $\vec G$ in $\Fans^d$ such that $\Lelfan' = \liminv \vec F$  and $\Lelfan''=\liminv \vec G$,
 $E(\limind \vec F) = A'$, and $E(\limind \vec G) = B''$.
By Theorem \ref{dense2}, both sequences are \fra\ in $\Fans^d$.
The uniqueness of a \fra\ sequence gives that there is an affine automorphism $\map H {\Lelfan'} {\Lelfan''}$ such that $\img H {A'} = B''$.
Then the map $  (h'')^{-1}\circ   H  \circ h' $ is as required.
\end{pf}

\begin{lm}\label{swap}
Let $U$ be a triangle in a geometric fan $L$. Let $0<b<1$. Let $e\in E(U)$ be such that $\frac{\rho(e)}{\rho(U)}>b$. 
Then there is an affine homeomorphism $h\colon U\to h[U]\subseteq U$ such that $\rho(h[U])=\rho(h(e))$
and for every $x\in U$, $1\geq \frac{\rho(h(x))}{\rho(x)}>b$. Moreover, $h(e)=e$ and for any $x\in U$, $h(x)$ is on the same spike as $x$.
\end{lm}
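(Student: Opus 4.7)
The plan is to define $h$ as a per-spike rescaling. Let $\pi \colon V \setminus \{0\} \to \Cantor$ be the natural projection and set $T := \pi[U \setminus \{0\}]$; for each $t \in T$, let $\ell(t)$ denote the level of the end-point of the spike of $U$ at $t$. I would look for a continuous $s \colon T \to (b,1]$ and put $h(x) := s(\pi(x)) \cdot x$ for $x \neq 0$ (with $h(0) = 0$). The conditions of the lemma translate to: (i) $s(t_e) = 1$ where $t_e := \pi(e)$, so that $h(e) = e$; (ii) $s(t)\,\ell(t) \leq \rho(e)$ for every $t \in T$, so that $\rho(h[U]) \leq \rho(e) = \rho(h(e))$; and (iii) $s(t) > b$ for every $t$.

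Since $U$ is compact, $\ell$ is upper semi-continuous and bounded by $\rho(U)$, so the naive pointwise upper bound $r(t) := \min\bigl(1, \rho(e)/\ell(t)\bigr)$ is lower semi-continuous, satisfies $r \geq \rho(e)/\rho(U) > b$, and $r(t_e) = 1$. The essential observation is that $r$ is in fact \emph{continuous at} $t_e$: for $t_n \to t_e$, upper semi-continuity gives $\limsup \ell(t_n) \leq \rho(e)$, hence $\liminf r(t_n) \geq 1$, and since $r \leq 1$ one gets $r(t_n) \to 1 = r(t_e)$.

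Next I would exploit zero-dimensionality of the Cantor set to produce a continuous $s$ with $b < s \leq r$ and $s(t_e) = 1$. Choose a decreasing sequence of clopen sets $W_1 \supseteq W_2 \supseteq \dots$ with $t_e \in \bigcap_n W_n = \{t_e\}$, set $C_0 := T \setminus W_1$ and $C_n := T \cap (W_n \setminus W_{n+1})$, and define $s(t_e) := 1$ and $s \equiv s_n := \inf_{C_n} r$ on each nonempty $C_n$ (put $s_n := 1$ if $C_n = \emptyset$). Each $s_n \geq \rho(e)/\rho(U) > b$, and the clopen pieces make $s$ locally constant off $t_e$; continuity at $t_e$ follows because for $t_k \to t_e$ one has $t_k \in C_{n_k}$ with $n_k \to \infty$, so $s(t_k) = \inf_{C_{n_k}} r \to 1$ by continuity of $r$ at $t_e$.

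Finally, with $h(x) = s(\pi(x)) \cdot x$, affineness is immediate from $\pi(\lambda x) = \pi(x)$ for $\lambda \in (0,1]$; continuity away from $0$ follows from continuity of $s$ and $\pi$, while $|h(x)| \leq |x|$ handles continuity at the cone point; $h$ is injective since it is a positive contraction on each spike and preserves spikes, hence a homeomorphism onto $h[U] \subseteq U$ by compactness. The three clauses of the lemma then fall out directly from conditions (i)--(iii) on $s$. The main obstacle is continuity of $s$ in the presence of jumps of $\ell$; it is resolved by the clopen partition of $T$ around $t_e$ together with the continuity of $r$ at $t_e$.
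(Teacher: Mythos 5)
Your proposal is correct and is essentially the same construction as the paper's: the paper takes a decreasing sequence of triangles $V_0\sups V_1\sups\cdots$ with $\bigcap_n V_n=[0,e]$ and rescales each annulus $V_n\setminus V_{n+1}$ by the constant $\rho(e)/\rho(V_n)\to 1$, which is exactly your clopen partition $C_n$ of the directions around $t_e$ with a constant scale on each piece tending to $1$. Your detour through the upper semi-continuity of $\ell$ and the function $r$ is sound but not needed in the paper's more direct formulation.
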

\begin{pf}
Let $U=V_0 \sups V_1 \sups V_2 \sups \cdots$ be a decreasing sequence of triangles such that $\bigcap_n V_n=[0,e]$.
Let $l_n=\rho(V_n)$. Let $h$ be such that $h\restriction (V_n\setminus V_{n+1})$ is the affine map
that takes $x\in V_n\setminus V_{n+1}$ into   $\frac{\rho(e)}{l_n}x$ and let $h\restriction[0,e]=\id{[0,e]}$.

Since $\lim_{n\to\infty} l_n=\rho(e)$, $h$ is continuous. Since moreover
$h$ is one-to-one, and $L$ is compact, we get that $h$ is a homeomorphism onto its image.
Note that $h$ satisfies all other required conditions.
\end{pf}

The remaining part of this section is devoted to the proof of Lemma~\ref{LemCrucill}.
We will need to modify the construction used  in the proof of Proposition~\ref{LmfgrYgnX}. 

\begin{pf}[Proof of Lemma~\ref{LemCrucill}]
Let $ \{s_n\}$ be an enumeration of $E(D)$.
We construct an affine homeomorphism $h\colon L\to L'\subseteq L$ 
and triangular decompositions $\widetilde{\mathcal{U}}_n$ of $L'$ of size $\frac{1}{n+2}$ 
such that for all $n$, for all $U\in \widetilde{\mathcal{U}}_n$, there is $a_U\in E(D)$ with $\rho(a_U)=\rho(U)$.
Once this is achieved, we will proceed as in the proof of Proposition \ref{LmfgrYgnX}.
Roughly speaking, we will replace
$L$ and $\{s_n\}$ by $L'$ and $\{h(s_n)\}$, for which  we can have level-preserving retractions.

First, we will do an inductive construction, where at step $n=-1,0,1,2,\ldots$ we will obtain
an affine homeomorphism $h_n\colon L_{n-1}\to L_n\subseteq L_{n-1}$ between  copies of $L$,
a triangular decomposition $\mathcal{U}_n$ of $L_n$ such that the size of each triangle in it is $\leq\frac{1}{n+2}$.
Furthermore, denoting $s'_i=h_n\circ\ldots\circ h_{-1} (s_i)$, $S'_n=\{s'_0, s'_1,\ldots, s'_n\}$, and $S'=\{s'_0,s'_1,\ldots\}$,
 for any $U\in \mathcal{U}_n$, there is $a_U\in E(U)\cap S'$ such that $\rho(a_U)=\rho(U)$, and each $s'\in S'_n$
 is realized as $a_U$ for some $U\in \mathcal{U}_n$. 

Fix a sequence of positive reals $\{b_n\}$ such that $\prod_n b_n>0$. 
\paragraph{Step $-1$.}
Let $\mathcal{U}_{-1}=\{L\}$ and let $h_{-1}\colon L_{-2}\to L_{-1}$, where  $L_{-2}= L_{-1}=L$, be
the identity map.

\paragraph{Step $n+1$.}
 Subdivide $\mathcal{U}_n$ to get $\mathcal{U}'_{n+1}$,
 a triangular decomposition of $L_n$, with
the property that the size of each triangle in it is $\leq\frac{1}{n+3}$, and moreover for $\widetilde{U}\in \mathcal{U}'_{n+1}$
such that $s'_{n+1}\in \widetilde{U}$, we have $\frac{\rho(s'_{n+1})}{\rho(\widetilde{U})}>b_{n+1}$. 
Let $a_{\widetilde{U}}=s'_{n+1}$ and
let for all $U\neq \widetilde{U}$, $a_U$ be some $s\in S'$ such that $\frac{\rho(s)}{\rho(U)}>b_{n+1}$.
We require that each $s'\in S'_{n}$ is realized as $a_U$ for some $U\in \mathcal{U}'_{n+1}$.
Apply Lemma \ref{swap} to each $U\in \mathcal{U}'_{n+1}$ (taking $e=a_U$ and $b=b_{n+1}$) and denote by $h_U$
the resulting affine homeomorphism. Let $h_{n+1}$ be such that $h_{n+1}\restriction U=h_U$, for every $U\in \mathcal{U}'_{n+1}$.
Let $L_{n+1}$ be the image of $h_{n+1}$ and  let $\mathcal{U}_{n+1}=\{U\cap L_{n+1}\colon U\in \mathcal{U}'_{n+1}\}$.
Let $a_U$ for $U\in \mathcal{U}_{n+1}$ be equal to $a_U$ for the corresponding $U\in \mathcal{U}'_{n+1}$.
Note that $h_{n+1}(a_U)=a_U$ for each $a_U$, $\rho(a_U)=\rho(U)$ for every $U\in \mathcal{U}_{n+1}$
and also each of $s''_0,\ldots, s''_{n+1}$ is realized as  $a_U$ for some $U\in \mathcal{U}_{n+1}$, where 
for each $i$,  $s''_i=h_{n+1}\circ\ldots\circ h_{-1} (s_i)$. 

\paragraph{The limit step.}
Let $h_n\colon L_n\to L_{n+1}$ be as above.
For each $n$ let $f_n=h_n\circ\ldots\circ h_0$. Clearly each $f_n$ is continuous and affine. 
The sequence $\{f_n\}$ converges uniformly, therefore its limit $h$ is a continuous function.
By the choice of $\{b_n\}$, it cannot happen that the image of a spike is equal to the root, and hence $h$ is one-to-one. Since $L$ is compact,
we get that $h$ is an affine homeomorphism onto its image, which we denote by $L'$. 
Let $\widetilde{\mathcal{U}}_n=\{U\cap L'\colon U\in \mathcal{U}_{n}\}$ and
let $a_U$ for $U\in\widetilde{\mathcal{U}}_n$ be equal to $a_U$ for the corresponding $U\in \mathcal{U}_{n}$.

We have  $\rho(a_U)=\rho(U)$ for every $U\in \widetilde{\mathcal{U}}_n$. Let $r_n$ be the retraction such that
$r_n\restriction U$, for every $U\in \widetilde{\mathcal{U}}_n$, is the level-preserving projection.
Note that as a set $\{h(s_n)\}$ is equal to the set set of all $a_U$, where $U\in \widetilde{\mathcal{U}}_n$ for some $n$.
Conditions of Lemma \ref{Lembgerg} are fulfilled.  
This completes the proof.
\end{pf}

\subsection{An example}

We finally present an example showing that Theorem~\ref{ThmIeirgbiurhg} cannot be deduced from the result of Kawamura, Oversteegen, and Tymchatyn~\cite{Kawamuraetal}
saying that for every countable dense sets $A,B\subset E(\Lelfan)$ there is a homeomorphism $g\colon E(\Lelfan)\to E(\Lelfan)$
such that $g(A)=B$. As we will see, not every homeomorphism of $E(\Lelfan)$ can be lifted to a homeomorphism of $\Lelfan$.

\begin{prop}\label{endp}
There exists a homeomorphism $h\colon E(\Lelfan)\to E(\Lelfan)$ such that for no homeomorphism $f\colon \Lelfan\to \Lelfan$, we have 
$f\rest E(\Lelfan)=h$.
\end{prop}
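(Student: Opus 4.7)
The plan is to exhibit a self-homeomorphism $h\colon E(\Lelfan)\to E(\Lelfan)$ together with two sequences $(y_n)$ and $(y'_n)$ in $E(\Lelfan)$, both converging in $\Lelfan$ to the same non-endpoint $x$, such that $h$ fixes each $y'_n$ but sends $(y_n)$ to a sequence converging in $\Lelfan$ to a different non-endpoint $x'\ne x$. Any putative extension $f\colon\Lelfan\to\Lelfan$ of $h$ would then satisfy $f(x)=\lim_n f(y_n)=x'$ and $f(x)=\lim_n f(y'_n)=x$ simultaneously, contradicting $x\ne x'$.

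Concretely, I would pick two distinct non-top non-endpoints $x,x'\in\Lelfan$, each in the interior of some spike. Density of $E(\Lelfan)$ in $\Lelfan$ yields pairwise disjoint sequences of endpoints $(y_n),(y'_n)\to x$ and $(z_n)\to x'$ in $\Lelfan$. Each of $\{y_n\}$, $\{y'_n\}$, $\{z_n\}$ is closed discrete in $E(\Lelfan)$, since its only $\Lelfan$-accumulation lies outside $E(\Lelfan)$. The $h$ I aim for is any self-homeomorphism of $E(\Lelfan)$ which is the identity on $\{y'_n\}_n$ and swaps $\{y_n\}_n$ with $\{z_n\}_n$ at the set level. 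Given such $h$, one has $h(y_n)=z_{\tau(n)}$ for some bijection $\tau$ of $\nat$, hence $\tau(n)\to\infty$ and so $h(y_n)\to x'$; meanwhile $h(y'_n)=y'_n\to x$, yielding the desired contradiction for any would-be extension.

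Producing $h$ is the main technical step. I would carry this out by a back-and-forth construction inside $E(\Lelfan)$, exploiting the rich relatively clopen structure of $E(\Lelfan)$ that it inherits from the Cantor factor $\Cantor$ of $V=\Delta(\Cantor)$, together with the fact that any clopen sector of $\Lelfan$ (coming from a clopen subset of $\Cantor$) is itself a Lelek fan and thus has its endpoint space homeomorphic to $E(\Lelfan)$. The argument is in the spirit of the back-and-forth underlying Theorem~\ref{ThmIeirgbiurhg} and of the countable dense homogeneity result from \cite{Kawamuraetal}, but working with disjoint closed discrete countable subsets in place of countable dense ones.

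The main obstacle is organizing this back-and-forth so that at every finite stage the prescribed partial data (identity on an initial segment of $\{y'_n\}_n$, and a partial set-level exchange between initial segments of $\{y_n\}_n$ and $\{z_n\}_n$) extends to small enough clopen pieces for the construction to close up into a full self-homeomorphism of $E(\Lelfan)$. Once $h$ is built, the failure of extension to $\Lelfan$ is immediate from the continuity argument in the opening paragraph.
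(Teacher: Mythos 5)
Your reduction is sound and, if completed, would prove something stronger than the paper does: if a self-homeomorphism $h$ of $E(\Lelfan)$ fixes pointwise a sequence of end-points converging (in $\Lelfan$) to a non-end-point $x$ while sending another sequence converging to $x$ onto one converging to $x'\neq x$, then $h$ admits no continuous extension to $\Lelfan$ at all, let alone a homeomorphic one. The genuine gap is that this $h$ --- which is the entire content of the proof --- is never constructed. Worse, the method you propose for building it cannot work as described: you want to extend finite partial data ``to small enough clopen pieces'', but $E(\Lelfan)$ has no small clopen pieces. It is totally disconnected yet $1$-dimensional (this is the main point of \cite{Kawamuraetal}), hence not zero-dimensional, and since it is homogeneous no point of it has a neighbourhood basis of clopen sets. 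The clopen subsets you actually have at hand, namely the traces $E(U)$ for triangles $U=\Lelfan\cap\Delta(W_s)$, refine only in the Cantor direction and do not shrink to points, so a clopen back-and-forth does not close up. Establishing the existence of your $h$ would require genuinely different tools (C-sets, or the structure theory of complete Erd\H{o}s space), and until that is done the proof has no content beyond the easy limiting argument in your first paragraph.

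The paper sidesteps this difficulty entirely. It writes down an explicit continuous but non-injective self-map $f$ of $\Lelfan$ (a level-folding map on the cone over the Cantor set, given by a concrete formula) whose restriction to $E(\Lelfan)$ is a homeomorphism of $E(\Lelfan)$ onto itself, and sets $h=f\rest E(\Lelfan)$. Since $E(\Lelfan)$ is dense in $\Lelfan$, any homeomorphism of $\Lelfan$ restricting to $h$ would have to coincide with $f$, which is not injective --- contradiction. Note that the paper's $h$ \emph{does} extend continuously to $\Lelfan$ (namely to $f$); only the homeomorphic extension fails, which is a much softer requirement than the total non-extendability you are aiming for. To salvage your approach you would either have to carry out the construction of $h$ in full, or switch to the paper's strategy of manufacturing $h$ as the trace of an explicit map defined on the whole fan.
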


We assume below that the Cantor fan $V$ is the cone of a Cantor set $C$ contained in $[0,1]\times \{1\}$ over the top point $(\frac{1}{2},0)$.
 The Lelek fan $\Lelfan$ is realized as a subfan of $V$.
Without loss of generality, we can, and we will, assume that there is a spike in $\Lelfan$ joining the top point 
$(\frac{1}{2},0)$ with $(\frac{1}{2},1)$.
To prove Proposition \ref{endp}, since $E(\Lelfan)$ is dense in $\Lelfan$, it is enough to show Lemma \ref{endplem}.

\begin{lm}\label{endplem}
There exists a continuous function $f\colon \Lelfan \to \Lelfan$, which is not one-to-one, such that
 $f\rest E(\Lelfan)\colon E(\Lelfan)\to E(\Lelfan)$ is a homeomorphism. 
\end{lm}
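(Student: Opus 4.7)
The plan is to realize $f$ as the composition of a quotient map that collapses a small interior sub-arc of the central spike with a homeomorphism identifying the resulting quotient with $\Lelfan$ itself.

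First I would use the assumed spike of $\Lelfan$ joining $(1/2,0)$ to $(1/2,1)$ to pick a closed arc $A$ in its interior that misses the endpoint $(1/2,1)$; concretely, $A = \{(1/2,t) : t\in[1/3, 2/3]\}$ works. Since the only endpoint of $\Lelfan$ lying on the central spike is its tip, we have $A\cap E(\Lelfan) = \emptyset$.

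Next I would form the quotient $q\colon \Lelfan \to \Lelfan/A$ collapsing $A$ to a single point $[A]$. Because $A$ is closed and $\Lelfan$ is compact metric, $\Lelfan/A$ is a compact metric space, and $q$ restricts to a homeomorphism of $\Lelfan\setminus A$ onto $(\Lelfan/A)\setminus\{[A]\}$. The crucial step is to verify that $\Lelfan/A$ is again a smooth fan with a dense set of endpoints. Arc connectedness and hereditary unicoherence transfer from $\Lelfan$ through $q$ (the key point being that the $q$-preimage of any connected subset of $\Lelfan/A$ meeting $[A]$ automatically contains the connected set $A$, and hence remains connected); the only ramification point of $\Lelfan/A$ is the image of the top of $\Lelfan$, as $[A]$ sits on the single collapsed-spike arc and so has arc order two; smoothness passes by lifting convergent sequences to $\Lelfan$ and applying smoothness there. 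Using that $A$ has empty interior in $\Lelfan$ (a consequence of density of $E(\Lelfan)$), one sees that $E(\Lelfan/A) = q(E(\Lelfan))$, and density of endpoints is thereby inherited.

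By the uniqueness of the Lelek fan there is then a homeomorphism $\phi\colon \Lelfan/A \to \Lelfan$, and $f = \phi\circ q$ is continuous and not one-to-one (it collapses $A$), while $f\rest E(\Lelfan)$ is the composition of two homeomorphisms: $q\rest E(\Lelfan)\colon E(\Lelfan) \to E(\Lelfan/A)$ (well-defined because $A\cap E(\Lelfan)=\emptyset$ and $q$ is a homeomorphism on $\Lelfan\setminus A$) followed by $\phi\rest E(\Lelfan/A)$, hence a homeomorphism onto $E(\Lelfan)$. The main obstacle I anticipate is the structural claim that $\Lelfan/A$ is a smooth fan whose endpoint set is precisely $q(E(\Lelfan))$; everything else is routine once that is in hand.
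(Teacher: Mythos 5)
Your proof is correct, and its underlying idea --- collapse an arc inside the distinguished spike, observe that the result is again a smooth fan with dense end-point set, and invoke uniqueness of the Lelek fan --- is in fact the same one the paper uses, but your execution is genuinely different. The paper realizes the collapse by an explicit coordinate formula: a fibre-preserving map $g$ of the square which, after passing to the cone, squashes the segment of the central spike between the top and level $\tfrac12$ onto the top point and is injective on every other spike (the auxiliary function $\alpha$ is there only to keep $g$ continuous across the critical fibre). The payoff of that explicitness is that the image $f_1(\Lelfan)$ is visibly a geometric subfan of the Cantor fan $V$, so ``smooth fan with dense end-points'' comes for free from the machinery already set up in the paper, and no abstract quotient space ever appears. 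Your version instead forms the abstract quotient $\Lelfan/A$ and must verify the topological characterization of smooth fans by hand; your sketch of that verification is sound --- monotonicity of $q$ gives hereditary unicoherence, the hyperspace continuity of $q$ together with a lifting/subsequence argument gives smoothness, and $E(\Lelfan/A)=q(E(\Lelfan))$ follows since $[A]$ is an interior point of the collapsed spike --- but this is exactly the ``main obstacle'' you flag, and it is the part that costs the most work compared with the paper's route. A small advantage of your choice of $A$ (an arc missing the top) is that the end-point bookkeeping is cleaner, since $q$ restricted to $E(\Lelfan)$ is literally the restriction of a homeomorphism $\Lelfan\setminus A\to(\Lelfan/A)\setminus\{[A]\}$; the paper instead has to argue separately that $f_1$ sends end-points to end-points and non-end-points to non-end-points.
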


\begin{pf}
Let $g\colon [0,1]^2\to [0,1]^2$ be a continuous function such that:
\begin{enumerate}[itemsep=0pt]
\item[(1)] for any $(x,y)\in [0,1]^2$, $\pi_1(g(x,y))=x$,
where $\pi_1$ is the projection onto the first coordinate;
\item[(2)] for any $x\in [0,1]$, $g(x,0)=(x,0)$;
\item[(3)] for every $x\neq \frac{1}{2}$, $g\rest (\{x\}\times [0,1])$ is one-to-one,
while $g\rest (\{\frac{1}{2}\}\times [0,1])$ is not one-to-one;
\item[(4)] $g(\frac{1}{2}, 1)\neq (\frac{1}{2},0)$.
\end{enumerate}
Such a function $g$ exists. We can, for example, take
\[
g(x,y) =
\begin{cases}
(x,2\alpha(x)y) & \qquad \text{if } y\leq \frac{1}{2}\\
(x, y+\alpha(x)-\frac{1}{2}) & \qquad \text{if } y\geq\frac{1}{2},
\end{cases}
\] where
$\alpha(x)=-x+\frac{1}{2}$ if $0\leq x\leq \frac{1}{2}$, and $\alpha(x)=x - \frac{1}{2}$ if $\frac{1}{2}\leq x\leq 1$.

Let $\pi\colon C\times [0,1]\to V$ be the quotient map.  (so $\pi(C\times\{0\})=(\frac{1}{2},0)$).
Let $f_1\colon \Lelfan \to V$ be given by $f_1(\pi(x,y))=\pi(g(x,y))$. By (1) and  (2) in the definition of $g$, $f_1$
is well defined. Since $g$ is continuous and not one-to-one, so is $f_1$. 

We claim that the image $f_1(\Lelfan)$ is homeomorphic to the Lelek fan.
By (1) and (2) in the definition of $g$, $f_1(\Lelfan)$ is a subfan of $V$. The set of end-points of $f_1(\Lelfan)$ is dense, because: 
the set of end-points of $\Lelfan$ is dense, $f_1$ is continuous, and  end-points of $\Lelfan$ are mapped to end-points of $f_1(\Lelfan)$.

We now verify that $f_1\rest (E(\Lelfan))\colon E(\Lelfan)\to E(f_1(\Lelfan))$ is a homeomorphism. By (3) and (4) in the definition of $g$, $f_1$ 
maps end-points to end-points and non-end-points to non-end-points, therefore $f_1\rest (E(\Lelfan))$ is a bijection
between  $E(\Lelfan)$ and $ E(f_1(\Lelfan))$.
Since $f_1$ is continuous, so is $f_1\rest (E(\Lelfan))$.
Using the compactness of $\Lelfan$, continuity of $f_1$,
and the fact that $f_1\rest (E(\Lelfan))$ is a bijection, we also see that $f_1\rest (E(\Lelfan))$  is open.

Let $f_2\colon f_1(\Lelfan) \to \Lelfan$ be a homeomorphism (it exists because of the uniqueness of the Lelek fan). Take $f=f_2\circ f_1$.
This $f$ is as required.
\end{pf}

\section{The Poulsen simplex}

In this section we present category-theoretic framework for metrizable Choquet simplices, showing that the Poulsen simplex is a \fra\ limit. It is worth pointing out that the geometric theory of finite-dimensional simplices is very similar to the theory of finite fans.

\subsection{Simplices}

A point $x$ in a compact convex set $K$ is an \emph{extreme point} if whenever $x=\lambda y +(1-\lambda) z$ for some $\lambda\in [0,1]$, $y,z\in K$, then $\lambda=0$ or $\lambda=1$. The set of extreme points of $K$ is denoted by $\ext K$.
For compact and convex sets $K$ and $L$,
recall that a map $p\colon L\to  K$ is \emph{affine} if for any $x,y\in L$ and $\lambda\in [0,1]$ we have
$p(\lambda x+(1-\lambda)y)=\lambda p(x) + (1-\lambda)p(y)$.
We call a map $p\colon L\to  K$ a \emph{projection} if it is an affine continuous surjection.
Note that in this case $\ext K \subs \img{p}{\ext L}$ and if $K$ is finite-dimensional then $p$ has a right inverse which is an affine embedding.
Furthermore, every continuous affine mapping defined on a compact convex set $L$ is uniquely determined by its restriction to $\ext L$.

A \emph{Choquet simplex} (later just called a  \emph{simplex}) is for us a non-empty compact convex and metrizable  set $K$ in a locally convex linear topological space such that every $x\in K$ has a unique representing measure, that is, a unique probability measure $\mu$ supported on the set of extreme points of $K$ and such that 
$$f(x) = \int_K f\, d\mu$$ for every continuous affine function $\map f K \Err$.
For more information on the theory of Choquet simplices we refer to Phelps' book~\cite{Phelps}.

Every finite-dimensional  simplex is, up to an affine isomorphism, of the form
$$\Delta_n = \setof{x \in \Err^{n+1}}{\sum_{i=1}^{n+1}x(i) = 1 \text{ and } x(i)\goe0 \text{ for every }i=1, \dots, n+1},$$
and $n\goe0$ is the \emph{dimension} of the simplex.
In particular, $\Delta_0$ is a singleton, $\Delta_1$ is a closed interval, and $\Delta_2$ is a triangle.

Let  $\map f {\Delta_{n+1}}{\Delta_n}$ be a projection.
Identifying $\Delta_n$ with
one of the $n$-dimensional faces of $\Delta_{n+1}$,
we can think of $f$ as a projection determined by choosing a point of $\Delta_n$ to be the image of the unique extreme point (vertex) of $\Delta_{n+1}$ that is not in $\Delta_n$.
Every projection $\map f{\Delta_n}{\Delta_m}$ is a compositions of such projections.
As a consequence, we see that every such map comes as the restriction of a linear projection $P_f$ from $\Err^{n+1}$ onto $\Err^{m+1}$.
Furthermore, when each $\Err^k$ is endowed with the Euclidean norm, all projections $P_f$ are 1-Lipschitz.
Thus, given an inverse sequence of projections
$$\xymatrix{
\Delta_{k_0} & \Delta_{k_1} \ar[l] & \Delta_{k_2} \ar[l] & \cdots \ar[l]
}$$
we can view its limit as a compact convex subset of the Hilbert space (the limit of the sequence of 1-Lipschitz projections between Euclidean spaces).

An \emph{embedding} of simplices is a one-to-one continuous affine map. By compactness,  every embedding is a topological embedding.
An embedding $\map i K L$ will be called \emph{stable} if $\img i {\ext K} \subs \ext L$.
We have the following analog of Lemma \ref{LmRaTxGh}.
\begin{lm}\label{stable}
Let $L$ be a finite simplex and let $\map p L K$ be a projection of simplices.
Then there exists a stable embedding $\map i K L$ such that $p \cmp i = \id K$.
\end{lm}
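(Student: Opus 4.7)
\medskip

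\noindent\textbf{Proof plan.} The strategy is to mimic the proof of Lemma~\ref{LmRaTxGh}: choose preimages of the extreme points of $K$ inside the extreme points of $L$, and then extend affinely. I would first observe that since $p$ is a projection (affine continuous surjection) and $L$ is finite-dimensional, the target $K$ is also a finite-dimensional simplex. Then, using the fact quoted in the preliminaries that $\ext K \subseteq p(\ext L)$, I would pick, for each $e \in \ext K$, a point $\tilde e \in \ext L$ with $p(\tilde e) = e$. This gives a function $f\colon \ext K \to \ext L$ such that $p \cmp f = \id{\ext K}$.

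Next, I would extend $f$ to an affine map $i\colon K \to L$. Since $K$ is a finite-dimensional simplex, its extreme points form an affinely independent set and every point of $K$ has a unique representation as a convex combination $x = \sum_{e \in \ext K}\lambda_e\, e$ with $\lambda_e \goe 0$ and $\sum_e \lambda_e = 1$; the barycentric coordinates $\lambda_e$ depend continuously on $x$. Hence the formula
\[
i(x) \;=\; \sum_{e \in \ext K} \lambda_e \cdot \tilde e
\]
defines an affine continuous map $i\colon K \to L$ with $i\rest{\ext K} = f$, and in particular $i(\ext K) \subseteq \ext L$, so $i$ is stable provided it is an embedding.

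To verify $p \cmp i = \id K$, I note that both sides are continuous affine maps from $K$ to $K$ agreeing on $\ext K$ by construction; since a continuous affine map on a compact convex set is uniquely determined by its values on extreme points, they coincide on $K$. From $p \cmp i = \id K$ it follows immediately that $i$ is one-to-one, hence a (topological) embedding by compactness of $K$.

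The main potential obstacle is ensuring the affine extension is well-defined, but this is harmless here: in a \emph{finite} simplex the extreme points are affinely independent, so the barycentric expansion is literally unique and continuity of the coordinates is automatic. (If one instead wanted a version for infinite-dimensional simplices, one would have to invoke the unique representing measure defining a simplex and extend $f$ by integration, which is the only nontrivial point; but that is not needed for the present statement.)
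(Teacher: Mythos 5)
Your proof is correct and follows exactly the route the paper intends: the paper gives no separate proof of Lemma~\ref{stable}, presenting it only as the analog of Lemma~\ref{LmRaTxGh}, whose proof (lift each extreme point to an extreme point using $\ext K \subs \img{p}{\ext L}$, extend affinely via barycentric coordinates, and check $p \cmp i = \id K$ on extreme points) is precisely what you reproduce. No gaps.
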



\subsection{The construction and properties of the Poulsen simplex}
Let $\Sim$ be the category whose objects are finite-dimensional simplices
$$\Delta_0, \Delta_1, \Delta_2, \dots.$$ 
Maps between the simplices are projections and an arrow from $F$ to $G$ is a pair $\pair e p$ such that $\map e F G$ is a stable embedding,  $\map p G F$ is a projection and $p \cmp e = \id F$.
Note that every $p$ is $1$-Lipschitz with respect to the metrics inherited from the Euclidean metrics.

The following is well-known. 
\begin{tw}[Corollary to Thm. 5.2, \cite{LaLi}]
Metrizable Choquet simplices are, up to affine homeomorphisms, precisely the limits of inverse sequences in $\Sim$.
\end{tw}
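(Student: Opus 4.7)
The plan is to prove both implications separately. For the forward direction, let $K = \varprojlim \Delta_{k_n}$ be the inverse limit of a sequence in $\Sim$ with limiting projections $\pi_n \colon K \to \Delta_{k_n}$. Compactness, metrizability, and convexity of $K$ follow from standard inverse-limit arguments, viewing $K$ as a closed convex subset of the countable product $\prod_n \Delta_{k_n}$, and each $\pi_n$ is an affine continuous surjection. To verify the Choquet property I would invoke the characterization that a metrizable compact convex set is a Choquet simplex iff the space $A(K)$ of continuous affine functions has the Riesz interpolation property. The pullbacks $\pi_n^*(A(\Delta_{k_n}))$ form an increasing chain of finite-dimensional subspaces of $A(K)$ whose union is uniformly dense in $A(K)$, and each $A(\Delta_{k_n})$ is a vector lattice (isomorphic to $\Err^{k_n+1}$ with coordinatewise order, via barycentric coordinates). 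Riesz interpolation between four given elements of $A(K)$ then reduces, by uniform approximation, to the trivial finite-dimensional case and passes to the closure.

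For the reverse direction, given a metrizable Choquet simplex $K$, the plan is to exhibit compatible projections $\pi_n \colon K \to \Delta_{k_n}$ realizing $K$ as an inverse limit in $\Sim$. Since $K$ is metrizable, the Banach space $A(K)$ is separable; fix a countable dense sequence $(a_i) \subs A(K)$ with $a_0 = 1$. I would inductively construct a chain $\Err \cdot 1 = A_0 \subs A_1 \subs A_2 \subs \cdots \subs A(K)$ of finite-dimensional subspaces with $a_n \in A_n$ such that each $A_n$ is a \emph{finite-dimensional simplex space}: linearly order-isomorphic, as an ordered space with order unit, to $A(\Delta_{k_n})$ for some $k_n \geq 0$. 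The dual of the inclusion $A_n \subs A_{n+1}$ then yields an affine projection $\Delta_{k_{n+1}} \to \Delta_{k_n}$, and evaluating points of $K$ on each $A_n$ yields compatible affine continuous maps $\pi_n \colon K \to \Delta_{k_n}$; these are surjective because (by Hahn--Banach) every state on $A_n$ extends to a state on $A(K)$, and $K$ is precisely the state space of $A(K)$. Since $\bigcup_n A_n$ is dense in $A(K)$, which separates points of $K$, the induced map $K \to \varprojlim \Delta_{k_n}$ is an affine homeomorphism.

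The main obstacle is the inductive enlargement step: given $A_n$ and the new target function $a_{n+1} \in A(K)$, produce a finite-dimensional simplex space $A_{n+1} \subs A(K)$ containing both. This is precisely where the Choquet hypothesis on $K$, equivalent to Riesz interpolation in $A(K)$, becomes indispensable. The Riesz decomposition property supplies the additional positive elements of $A(K)$ needed to close $A_n + \Err \cdot a_{n+1}$ under the lattice-like operations that characterize finite-dimensional simplex spaces, keeping the enlargement both finite-dimensional and inside $A(K)$. This is the content of Lazar--Lindenstrauss's Theorem 5.2, whose proof proceeds by iteratively applying Riesz interpolation to extract a finite basis of positive generators adequate to witness the simplex-space structure of $A_{n+1}$; without the Choquet hypothesis no such finite enlargement need exist.
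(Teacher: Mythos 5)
The paper gives no proof of this statement---it is quoted as a known corollary of Lazar--Lindenstrauss Theorem 5.2---and your outline is precisely the standard argument behind that citation, with the one genuinely hard step (the inductive enlargement of $A_n$ to a finite-dimensional simplex space containing $a_{n+1}$) correctly delegated to that same theorem. The only point you gloss is that uniform approximation gives Riesz interpolation in $A(K)$ only up to an $\eps$, so one must iterate the approximate interpolants into a convergent sequence before ``passing to the closure''; this is standard and does not affect correctness.
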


Let us note the following important property of our category.

\begin{lm}\label{LmRevAmlgsms}
The category $\Sim$ has the strict amalgamation property.
\end{lm}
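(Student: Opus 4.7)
The plan is to build $W$ as the simplex whose extreme points form the ``amalgamated sum'' of $\ext A$ and $\ext B$ over $\ext C$, in direct analogy with the fan case. Concretely, write $f = \pair{e_1}{p_1}\colon C \to A$ and $g = \pair{e_2}{p_2}\colon C \to B$, and let $W$ be the finite simplex whose set of extreme points is the disjoint union $(\ext A) \sqcup (\ext B \setminus e_2(\ext C))$, with $\ext C$ embedded into this set via $e_1$; equivalently, $W = \Delta_{k+m-n}$ if $C = \Delta_n$, $A = \Delta_k$, $B = \Delta_m$. Since $W$ is finite-dimensional, any assignment on $\ext W$ extends uniquely to an affine continuous map on $W$.

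Next, define the stable embeddings $e'_1\colon A \to W$ and $e'_2\colon B \to W$ as the affine extensions of the canonical inclusions $\ext A \hookrightarrow \ext W$ and $\ext B \hookrightarrow \ext W$ (where $\ext B$ meets the copy of $\ext A$ exactly along $e_2(\ext C) \simeq e_1(\ext C)$). By construction $e'_1 \cmp e_1 = e'_2 \cmp e_2$, since both compositions are just the canonical map $\ext C \to \ext W$.

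Now define the projections $p'_1\colon W \to A$ and $p'_2\colon W \to B$ on extreme points by
\[
p'_1(x) = \begin{cases} x & \text{if } x \in \ext A, \\ e_1 \cmp p_2(x) & \text{if } x \in \ext B \setminus e_2(\ext C),\end{cases}
\qquad
p'_2(x) = \begin{cases} x & \text{if } x \in \ext B, \\ e_2 \cmp p_1(x) & \text{if } x \in \ext A \setminus e_1(\ext C),\end{cases}
\]
and extend affinely. The identities $p'_1 \cmp e'_1 = \id_A$ and $p'_2 \cmp e'_2 = \id_B$ are immediate from the first clause in each definition, so $\pair{e'_1}{p'_1}$ and $\pair{e'_2}{p'_2}$ are genuine $\Sim$-arrows.

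It remains to verify $p_1 \cmp p'_1 = p_2 \cmp p'_2$ as maps $W \to C$, which reduces to checking equality on $\ext W$. For $x \in \ext A \setminus e_1(\ext C)$ both sides equal $p_1(x)$; for $y \in \ext B \setminus e_2(\ext C)$ both sides equal $p_2(y)$; and for $x$ in the common copy of $\ext C$ both sides equal the corresponding point of $\ext C$. This is the only step with any computational content, and it is essentially bookkeeping on finitely many vertices, so I do not anticipate any real obstacle. The amalgamation produced is manifestly strict, completing the argument.
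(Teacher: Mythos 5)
Your proof is correct and takes essentially the same route as the paper's: there, $C$ is realized as a common face of $A$ and $B$ in one ambient vector space, $W$ is the convex hull of $A\cup B$ (a simplex of the same dimension $k+m-n$ you compute), and the two projections are defined exactly by your formulas --- identity on one factor and the other factor pushed through $C$. The only difference is notational: you keep the embeddings $e_1,e_2$ explicit and check the identities vertex by vertex, while the paper suppresses them by treating everything as inclusions.
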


\begin{pf}
We may assume that $f$ and $g$ are projections
of $\Delta_\ell$ and $\Delta_m$, respectively, onto $\Delta_k$, so that $\Delta_k$ is a face of both $\Delta_\ell$ and $\Delta_m$.
As $\Delta_\ell$ and $\Delta_m$ are abstract simplices, we may assume that they both ``live'' in the same vector space so that $\Delta_k$ is their intersection.
Now the convex hull of $\Delta_\ell \cup \Delta_m$ is a simplex, which we denote by $\Delta_n$, where $n = \ell+m - k$. Note that $f$, $g$ determine projections $f'$, $g'$ satisfying $f' \rest \Delta_m = g$, $f' \rest \Delta_\ell = \id{\Delta_\ell}$, and $g' \rest \Delta_\ell = f$, $g' \rest \Delta_m = \id{\Delta_m}$.
These projections obviously satisfy $f \cmp f' = g \cmp g'$.
\end{pf}

\begin{lm}
$\Sim$ is a separable metric category. 
\end{lm}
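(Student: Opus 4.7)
The plan is to mimic the proof of separability for $\Fans$, using rational barycentric coordinates as the countable approximation data. I would take $\Ef$ to have as objects the simplices $\Delta_0,\Delta_1,\ldots$ and as arrows those $\Sim$-arrows $\pair e p$ for which the projection $p$ sends every vertex of its domain to a point with rational barycentric coordinates in its codomain. Countability holds because for any pair $(k,n)$ there are only finitely many stable embeddings $e\colon\Delta_k\to\Delta_n$ (each amounts to a choice of $k+1$ of the $n+1$ vertices), while each admissible $p$ is then determined by specifying the images of the remaining $n-k$ vertices in $\Delta_k$, for which there are countably many rational choices. Closure under composition is automatic: a projection with rational vertex values is linear on barycentric coordinates, so it carries rational points to rational points.

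Condition (1) of separability is trivial because $\Ob(\Sim)=\Ob(\Ef)$ and the identity arrow lies in $\Ef$. For condition (2), given $f=\pair{e_f}{p_f}\colon A=\Delta_k\to Y=\Delta_n$ in $\Sim$ and $\eps>0$, I would set $B=Y$ and take $g=\pair{\id_Y}{\id_Y}$, so that $g\cmp f=f$, and then construct an $\Ef$-arrow $u=\pair{e_f}{p_u}$ as follows: $p_u$ is the unique affine map $Y\to A$ that sends each vertex $e_f(v)$ back to $v\in\ext A$ (forced by $p_u\cmp e_f=\id_A$) and sends each of the remaining $n-k$ vertices of $Y$ to a point of $A$ with rational barycentric coordinates within Euclidean distance $\eps$ of its $p_f$-image. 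Such rational approximations exist by density of $\Qyu$-points in $\Delta_k$, and the vertices of $A$ themselves have rational (indeed $0/1$) barycentric coordinates, so $u$ belongs to $\Ef$.

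The estimate $d(g\cmp f,u)<\eps$ then follows from the standard convexity bound: writing $y=\sum_i\lambda_i w_i$ in barycentric coordinates with respect to the vertices $w_i$ of $Y$, one has
\[
p_u(y)-p_f(y)=\sum_i\lambda_i\bigl(p_u(w_i)-p_f(w_i)\bigr),
\]
whose norm is bounded by $\max_i\norm{p_u(w_i)-p_f(w_i)}<\eps$. I foresee no genuine obstacle here; the only delicate points are to remember that $d$ is $+\infty$ unless the embedding parts coincide (handled by choosing $e_u=e_f$) and to verify that $\Ef$ is closed under composition, which has been addressed above.
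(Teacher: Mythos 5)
Your proposal is correct and follows essentially the same route as the paper: the authors also take the countable subcategory $\Sim^{\Qyu}$ of arrows $\pair e p$ whose projection part sends each extreme point of the codomain simplex to a rational point, and note it witnesses separability. Your write-up merely supplies the verification of conditions (1) and (2) and of closure under composition that the paper leaves implicit, and all of those details check out.
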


\begin{pf}
Note that $\ob{\Sim} = \sett{\Delta_n}{\ntr}$ is countable.
Denote by $\Sim^\Qyu$ the subcategory of $\Sim$ such that $\ob {\Sim^\Qyu} = \ob{\Sim}$ and an $\Sim^\Qyu$-arrow from $\Delta_m$ to $\Delta_n$ is an $\Sim$-arrow 
$f=\pair e p $ such that $p(v)$ has rational barycentric coordinates for every extreme point $v$ in $\Delta_n$.
Then  $\Sim^\Qyu$ has countably many arrows and is as required.
\end{pf}

By Theorem \ref{fraseq}, $\Sim$ has a \fra\ sequence, whose limit we denote by $ U_\infty$.
The proof of the following theorem goes along the lines of the proof of Theorem \ref{dense}.

\begin{tw}\label{krbjkebfjkebfjkb}
Let $\vec U$ be a sequence in $\Sim$ and let $K$ be its inverse limit.
The following properties are equivalent:
\begin{enumerate}
	\item[(a)] The set $\ext K$ is dense in $K$.
	\item[(b)] $\vec U$ is a \fra\ sequence.
\end{enumerate}
\end{tw}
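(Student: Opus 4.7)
The plan is to follow the proof of Theorem \ref{dense} essentially verbatim, with spikes and end-points replaced by the appropriate notions for simplices, except that one crucial step in the direction (a) $\Rightarrow$ (b) no longer works in the elementary geometric form used for fans and must be replaced by an appeal to Milman's theorem.

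For the direction (b) $\Rightarrow$ (a), I would fix $x \in U_m$ for some $m$; by Lemma \ref{Lembgerg} the set $\bigcup_n U_n$ is dense in $K$, so it suffices to place every such $x$ in $\overline{\ext K}$. Iteratively applying (F) with tolerance $2^{-k}$ to the $\Sim$-arrow $f_k \colon U_{n_{k-1}} \to F_k$, where $F_k$ is the simplex that extends $U_{n_{k-1}}$ by a single fresh vertex $v$ with $p(v)=x$, produces $n_k > n_{k-1}$ and an extreme point $w_k \in \ext U_{n_k}$ whose retraction $r_{n_{k-1}}(w_k)$ lies within $2^{-k}$ of $x$. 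Stable embeddings preserve extremeness all the way up the tower, and a short limit argument using Lemma \ref{Lembgerg} then shows that each $w_k$ is in fact extreme in $K$. Passing to a convergent subsequence $w_{k_j} \to y$ in $K$ and using the tower identity $r_\ell \cmp r_{n_k} = r_\ell$ whenever $n_k \goe \ell$, exactly the computation in Theorem \ref{dense} gives $r_\ell(x) = r_\ell(y)$ for every $\ell$, whence $y = x \in \overline{\ext K}$.

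For (a) $\Rightarrow$ (b), I would fix a $\Sim$-arrow $f = \pair e p \colon U_m \to U$ and $\eps > 0$, and reduce (by composition) to the case where $U$ is $U_m$ with a single adjoined vertex $v$ satisfying $p(v) = x$. The task then becomes producing $n > m$ and an extreme point $w \in \ext U_n$, outside the finite set $e_m^n(\ext U_m)$, with $r_m^n(w)$ within $\eps$ of $x$. Given such a $w$, the arrow $g = \pair{e'}{p'}$ defined by $e'|_{U_m} = e_m^n$, $e'(v) = w$, $p'(w) = v$, $p'$ inverse to $e_m^n$ on vertices of $e_m^n(\ext U_m)$, and $p'(u) = r_m^n(u) \in U_m \subseteq U$ on any remaining vertex $u$ of $U_n$, agrees with $u_m^n$ on the embedding component and differs from it in the projection component by at most $\eps$ on every vertex of $U_n$; by affineness this bound extends to all of $U_n$, giving $d(g \cmp f, u_m^n) \loe \eps$.

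The main obstacle is the production of $w$: density of $\ext K$ alone yields some $\tilde x \in \ext K$ close to $x$, but $\tilde x$ need not belong to any $U_n$, and its retractions $r_n(\tilde x)$ are generally not extreme. I would close this gap by Milman's theorem: setting $F := \overline{\bigcup_n \ext U_n}$, the closed convex hull of $F$ contains the dense set $\bigcup_n U_n$ and therefore equals $K$, whence $\ext K \subseteq F$. Consequently $\tilde x$ (and so $x$, up to an $\eps$-error) is approximable by elements of $\bigcup_n \ext U_n$, and a tiny perturbation of the approximation lets one avoid the finite exceptional set $e_m^n(\ext U_m)$, delivering the required $w$.
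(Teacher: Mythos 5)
Your proposal is correct and follows the same route as the paper, whose entire proof of this theorem is the remark that it ``goes along the lines of the proof of Theorem \ref{dense}''. Your one genuine addition --- invoking Milman's theorem to get $\ext K \subseteq \overline{\bigcup_n \ext U_n}$, so that the approximating extreme point can be taken to be a vertex of some $U_n$ --- is precisely the step that does not transfer verbatim from the fan setting (where the analogous approximation of end-points of $U_\infty$ by end-points of the $U_n$ comes from the triangular-decomposition geometry), and it is the right fix.
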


\begin{wn}
$U_\infty$ is the Poulsen simplex.
\end{wn}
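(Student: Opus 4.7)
The plan is to combine three ingredients already in hand: the representation theorem for metrizable Choquet simplices as inverse limits in $\Sim$, the characterization of \fra\ sequences from Theorem~\ref{krbjkebfjkebfjkb}, and the classical uniqueness theorem of Lindenstrauss--Olsen--Sternfeld for the Poulsen simplex.

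First, I would observe that by the corollary to Theorem~5.2 of \cite{LaLi} stated in the excerpt, every inverse limit of a sequence in $\Sim$ is a metrizable Choquet simplex. In particular, since $U_\infty$ was defined as the limit of a \fra\ sequence $\vec U$ in $\Sim$, it is automatically a metrizable Choquet simplex. This disposes of the ``simplex'' part of the statement without any extra work.

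Next, I would apply Theorem~\ref{krbjkebfjkebfjkb} directly to $\vec U$: since $\vec U$ is a \fra\ sequence in $\Sim$, the implication (b)$\Rightarrow$(a) yields that $\ext U_\infty$ is dense in $U_\infty$. Combined with the previous step, $U_\infty$ is therefore a metrizable Choquet simplex whose set of extreme points is dense, which is precisely the defining property of the Poulsen simplex.

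Finally, to conclude that $U_\infty$ \emph{is} the Poulsen simplex (and not merely a simplex sharing its characteristic property), I would invoke the uniqueness result of Lindenstrauss, Olsen, and Sternfeld~\cite{LOS}: up to affine homeomorphism, there exists exactly one metrizable Choquet simplex with dense extreme points. Hence $U_\infty$ is affinely homeomorphic to the Poulsen simplex $\Poulsen$. There is no real obstacle here; the only conceptual point worth flagging is that one could alternatively bypass the external uniqueness theorem by combining Theorem~\ref{uniq} (uniqueness of \fra\ sequences) with a direct verification that the Poulsen simplex itself arises as the limit of some \fra\ sequence in $\Sim$, which would give an internal, self-contained proof of uniqueness along the way.
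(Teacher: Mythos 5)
Your proposal is correct and matches the paper's (implicit) argument: the corollary is stated as an immediate consequence of Theorem~\ref{krbjkebfjkebfjkb} applied to the \fra\ sequence $\vec U$, together with the Lazar--Lindenstrauss representation guaranteeing that the limit is a metrizable Choquet simplex. Your closing remark about deriving uniqueness internally from Theorem~\ref{uniq} rather than citing \cite{LOS} is in fact exactly the route the paper takes in its subsequent Uniqueness corollary.
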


From now on, we shall write $\mathbb{P}$ instead of $U_\infty$.
\fra\ theory  provides  direct proofs of the uniqueness, universality, and almost homogeneity, originally proved 
in 1978 by Lindenstrauss, Olsen and Sternfeld \cite{LOS}.

A subset $F$ of a simplex $K$ is called a \emph{face} if it is compact, convex, and $\ext F\subset \ext K$.

\begin{wn}[Uniqueness]	
The Poulsen simplex is unique, up to affine homeomorphisms.
\end{wn}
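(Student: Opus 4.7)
The plan is to deduce this uniqueness statement directly from the general Fra\"iss\'e-theoretic machinery already set up, in exact parallel to the uniqueness argument for the Lelek fan. Recall that a Poulsen simplex means a metrizable Choquet simplex whose set of extreme points is dense, so the claim to prove is that any two such simplices are affinely homeomorphic.

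First, I would start with an arbitrary metrizable Choquet simplex $K$ with $\ext K$ dense in $K$. By the corollary to Theorem 5.2 of \cite{LaLi} quoted above, $K$ is the inverse limit of some sequence $\vec U$ in $\Sim$. By Theorem~\ref{krbjkebfjkebfjkb}, the density of $\ext K$ in $K$ is equivalent to $\vec U$ being a \fra\ sequence in $\Sim$. Thus every Poulsen simplex arises as the limit of a \fra\ sequence in our category.

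Next, given two Poulsen simplices $K_1, K_2$, we realize each as the limit of a \fra\ sequence $\vec U^{(1)}$, $\vec U^{(2)}$ in $\Sim$. By Theorem~\ref{uniq} (uniqueness of \fra\ sequences from \cite{KubMF}), the sequences $\vec U^{(1)}$ and $\vec U^{(2)}$ are isomorphic in the appropriate sense of sequences in $\Sim$. Passing to inverse limits, this isomorphism of sequences induces an affine homeomorphism between $K_1$ and $K_2$, which is exactly what uniqueness asserts. In particular, the \fra\ limit $\mathbb P = U_\infty$ is, up to affine homeomorphism, the unique such object.

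The only real content here is verifying that an isomorphism of \fra\ sequences in $\Sim$ (consisting of compatible pairs $\pair e p$ between the two sequences) actually yields an affine homeomorphism of inverse limits. However, this is immediate: the projection components give an affine continuous bijection between the limit compacta, and by compactness this is a homeomorphism; it is affine because each bonding map and each transferring map is affine. So there is no real obstacle, only the need to invoke the correct three earlier statements in sequence; the corollary is essentially a formal consequence of Theorems~\ref{uniq} and~\ref{krbjkebfjkebfjkb} together with the representation theorem from \cite{LaLi}.
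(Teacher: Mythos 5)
Your argument is correct and is exactly the route the paper intends (it leaves the proof implicit, citing the general \fra\ theory): represent each Poulsen simplex as an inverse limit of a sequence in $\Sim$ via the Lazar--Lindenstrauss theorem, apply Theorem~\ref{krbjkebfjkebfjkb} to see the sequence is \fra, and conclude by the uniqueness of \fra\ sequences (Theorem~\ref{uniq}), just as is done for the Lelek fan. No issues.
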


\begin{wn}[Universality]\label{CorSDFohesim}
Every metrizable simplex is affinely homeomorphic to a face of $\mathbb{P}$.
\end{wn}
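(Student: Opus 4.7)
The plan is to derive the corollary from the Universality Theorem~\ref{univ} combined with the cited Lazar--Lindenstrauss characterization, according to which every metrizable simplex is the inverse limit of a sequence in $\Sim$. Given a metrizable simplex $K$, I would first fix such a representation $K = \liminv \vec K$ with $\vec K = \invsys K f \omega n m$. Theorem~\ref{univ} then furnishes an arrow $\vec K \to \vec U$, where $\vec U$ is the \fra\ sequence whose limit is $\mathbb P$. Unfolding the definition of an arrow between sequences in $\Sim$ yields a strictly increasing $\sigma\colon\omega\to\omega$ and compatible $\Sim$-morphisms $\pair{e_n}{p_n}\colon K_n \to U_{\sigma(n)}$; passing to the inverse limit one obtains an affine embedding $e\colon K \to \mathbb P$ and a projection $p\colon \mathbb P \to K$ with $p\cmp e = \id K$, and by construction $r_{\sigma(n)}\cmp e = e_n \cmp q_n$, where $q_n\colon K \to K_n$ and $r_m\colon \mathbb P \to U_m$ denote the canonical projections from the two inverse limits.

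The remaining task is to show that $e(K)$ is a face of $\mathbb P$. Each $e_n$ is a stable embedding between finite-dimensional simplices, so $e_n(K_n)$ is the convex hull of a subset of the vertices of $U_{\sigma(n)}$ and is therefore a face of $U_{\sigma(n)}$. In general, for any affine continuous surjection $r\colon L \to M$ between compact convex sets and any face $F\subs M$, the preimage $r^{-1}(F)$ is a face of $L$: if $z = \lambda z_1 + (1-\lambda)z_2$ with $z \in r^{-1}(F)$, then $r(z) = \lambda r(z_1) + (1-\lambda)r(z_2) \in F$, so the face property gives $r(z_1), r(z_2)\in F$ and hence $z_1,z_2 \in r^{-1}(F)$. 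Applying this with $r = r_{\sigma(n)}$ and $F = e_n(K_n)$ shows that each $r_{\sigma(n)}^{-1}(e_n(K_n))$ is a face of $\mathbb P$, and since arbitrary intersections of faces are faces, the identity
\[
e(K) \;=\; \bigcap_{n\in\omega} r_{\sigma(n)}^{-1}\bigl(e_n(K_n)\bigr)
\]
will then exhibit $e(K)$ as a face of $\mathbb P$.

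The one step needing bookkeeping is the displayed identity. The inclusion $\subs$ is immediate from $r_{\sigma(n)}\cmp e = e_n\cmp q_n$. For the reverse, assume $r_{\sigma(n)}(y)\in e_n(K_n)$ for every $n$; since $e_n$ is injective, there is a unique $x_n\in K_n$ with $r_{\sigma(n)}(y) = e_n(x_n)$. The projection half of the arrow's compatibility, namely $p_n\cmp \pi_{\sigma(n)}^{\sigma(n+1)} = f_n^{n+1}\cmp p_{n+1}$ where $\pi$ denotes the bonding projection of $\vec U$, combined with $r_{\sigma(n)} = \pi_{\sigma(n)}^{\sigma(n+1)}\cmp r_{\sigma(n+1)}$, forces $x_n = f_n^{n+1}(x_{n+1})$. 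Thus the $x_n$'s cohere to some $x\in K$ with $q_n(x)=x_n$, and then $r_{\sigma(n)}(e(x)) = e_n(x_n) = r_{\sigma(n)}(y)$ for all $n$ gives $e(x) = y$. I do not foresee any real obstacle beyond this compatibility check; the strategy is simply to embed $K$ by \fra\ universality and then to recognize the image as an intersection of pulled-back faces of finite-dimensional approximants.
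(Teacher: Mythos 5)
Your route is the intended one: represent $K$ as the limit of a sequence $\vec K$ in $\Sim$ via Lazar--Lindenstrauss, apply Theorem~\ref{univ} to get an arrow $\vec K\to\vec U$, and recognize the image of the limit embedding as a face by writing it as an intersection of preimages of faces (this last verification is genuinely needed, since $p\cmp e=\id K$ alone only yields $p(y)=p(z)=x$ when $e(x)=\lambda y+(1-\lambda)z$, and a point extreme in every member of an increasing chain of compact convex sets need not be extreme in the closure of the union). The gap is the phrase ``by construction $r_{\sigma(n)}\cmp e=e_n\cmp q_n$''. In the metric-enriched framework of \cite{KubMF}, the arrow between sequences supplied by Theorem~\ref{univ} is only an \emph{approximate} arrow: the relevant squares commute up to errors tending to $0$. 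Since $d$ is $+\infty$ unless the embedding components coincide, the embedding halves do cohere exactly, $e_m\cmp j_n^m=\iota_{\sigma(n)}^{\sigma(m)}\cmp e_n$; but the projection halves satisfy only $d\bigl(f_n^m\cmp p_m,\,p_n\cmp\pi_{\sigma(n)}^{\sigma(m)}\bigr)<\eps_n$, and the \emph{mixed} identity your argument actually uses, $\pi_{\sigma(n)}^{\sigma(m)}\cmp e_m=e_n\cmp f_n^m$, is not among the imposed compatibilities. The two sides agree on $K_n\subs K_m$, but for a vertex $v$ of $K_m$ outside $K_n$ the point $\pi_{\sigma(n)}^{\sigma(m)}(e_m(v))$ need not lie in the face $e_n(K_n)$ at all; the approximate projection compatibility only forces its image under $p_n$ to be close to $f_n^m(v)$. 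Consequently even the inclusion $e(K)\subs\bigcap_n r_{\sigma(n)}^{-1}(e_n(K_n))$ is not automatic, and with it falls the entire face verification.

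The statement is true and your intersection mechanism is the right one; what is missing is a strictly commuting diagram to feed into it. The cleanest repair bypasses Theorem~\ref{univ}: starting from $\vec K$, use the strict amalgamation property of $\Sim$ (Lemma~\ref{LmRevAmlgsms}) to build by hand a sequence $\vec W$ together with genuinely commuting $\Sim$-arrows $K_n\to W_n$, so that each $K_n$ is a face of $W_n$, the bonding projections of $\vec W$ extend those of $\vec K$ and carry the faces $K_m$ onto the faces $K_n$, and new vertices are added densely enough that $\ext(\liminv\vec W)$ is dense. Theorem~\ref{krbjkebfjkebfjkb} then shows $\vec W$ is a \fra\ sequence, uniqueness identifies $\liminv\vec W$ with $\mathbb P$, and your computation of $\bigcap_n r_n^{-1}(K_n)$ applies verbatim to exhibit $K$ as a face.
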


\begin{wn}[Almost homogeneity]\label{almhomsim}
Let $F$ be a finite-dimensional face of a Poulsen simplex and let $f, g\colon \mathbb{P}\to F$ be projections.
Then for every $\eps>0$  there is an affine homeomorphism $H\colon\mathbb{P}\to\mathbb{P}$ such that for every 
$x\in \mathbb{P}$,  $d_F(f\circ H(x), g(x))<\eps$, where $d_F$ is any compatible metric on $F$.
\end{wn}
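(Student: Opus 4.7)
The plan is to apply the Almost Homogeneity Theorem \ref{alhom} to the metric category $\Sim$, whose \fra\ limit is $\mathbb{P}$. The preparation consists in promoting each given projection $f,g\colon\mathbb{P}\to F$ into a full $\Sim$-arrow from $F$ to the limit $\mathbb{P}$, i.e.\ pairing it with a stable affine embedding $F\hookrightarrow\mathbb{P}$ that serves as its right inverse. Note that $F$, being a finite-dimensional Choquet simplex, is an object of $\Sim$.

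To produce the right inverse of $f$, I would use the observation that for each extreme point $v\in\ext F$ the fibre $f^{-1}(v)$ is a face of $\mathbb{P}$: if $f(\lambda y+(1-\lambda)z)=v$ with $v$ extreme and $\lambda\in(0,1)$, then $f(y)=f(z)=v$. By Krein--Milman, $f^{-1}(v)$ has an extreme point $w_v$, which is automatically extreme in $\mathbb{P}$. Choose one such $w_v$ for each vertex $v$ of $F$ and let $i_f\colon F\to\mathbb{P}$ be the affine extension of $v\mapsto w_v$. Since $F$ has only finitely many vertices, $i_f$ is automatically continuous; it is stable by construction, and the identity $f\circ i_f=\id F$ (which holds on $\ext F$ by the choice of $w_v$ and extends affinely) forces injectivity. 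Thus $\pair{i_f}{f}$ is a $\Sim$-arrow from $F$ to the limit $\mathbb{P}$, and symmetrically one produces $\pair{j_g}{g}$ from $g$. This step is an infinite-dimensional analogue of Lemma \ref{stable}, with Krein--Milman replacing the elementary finite-dimensional argument used there.

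Now apply Theorem \ref{alhom} with $A=B=F$, $\phi=\id F$, $i=\pair{j_g}{g}$, and $j=\pair{i_f}{f}$. It returns an affine auto-homeomorphism $H$ of $\mathbb{P}$, realised as the $\Sim$-isomorphism $\pair{H}{H^{-1}}$, satisfying $d(j\circ\phi,\,H\circ i)<\eps$. By the composition rule from Section~2 we have $H\circ\pair{j_g}{g}=\pair{H\circ j_g}{g\circ H^{-1}}$ while $j\circ\phi=\pair{i_f}{f}$; for the distance to be finite the embedding parts must coincide, giving $H\circ j_g=i_f$, and the $\eps$-bound on the projection parts then reads
$$\max_{y\in\mathbb{P}} d_F\bigl(f(y),\,g(H^{-1}(y))\bigr)<\eps.$$
Substituting $y=H(x)$ yields $d_F(f(H(x)),g(x))<\eps$ for every $x\in\mathbb{P}$, which is the desired conclusion. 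The main technical point is the construction of stable right inverses inside the infinite-dimensional $\mathbb{P}$, where Lemma \ref{stable} cannot be invoked verbatim; once this is in hand, everything else is a mechanical unpacking of Theorem \ref{alhom} in the metric category $\Sim$.
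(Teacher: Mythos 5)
Your proposal is correct and follows essentially the paper's own (implicit) route: the corollary is derived from Theorem~\ref{alhom} applied in $\Sim$ together with the existence of \emph{stable} right inverses for projections onto finite-dimensional simplices, and your Krein--Milman argument on the fibres over extreme points is exactly the content of the paper's remark in Section~4.1 that $\ext K \subs \img{p}{\ext L}$ and that $p$ then admits an affine right inverse when $K$ is finite-dimensional (the infinite-dimensional-domain version of Lemma~\ref{stable}). The only point left tacit on both sides is that all compatible metrics on the compact set $F$ are uniformly equivalent, so proving the bound for the Euclidean/barycentric metric used in $\Sim$ suffices for ``any compatible metric $d_F$''.
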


Similarly as for the Lelek fan (Corollary \ref{almhoms}) from Corollary \ref{almhomsim} we can deduce the following stronger statement.
\begin{wn}[Almost homogeneity]\label{almhomsims}
Let $F$ be a Choquet simplex simplex and let $f, g\colon \mathbb{P}\to F$ be projections.
Then for every $\eps>0$  there is an affine homeomorphism $H\colon\mathbb{P}\to\mathbb{P}$ such that for every 
$x\in \mathbb{P}$,  $d_F(f\circ H(x), g(x))<\eps$, where $d_F$ is a fixed compatible metric on $F$.
\end{wn}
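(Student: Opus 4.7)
The plan is to mirror the argument of Corollary~\ref{almhoms}, reducing to the finite-dimensional Corollary~\ref{almhomsim} by approximating $F$ by its finite-dimensional faces. First I would invoke the theorem cited right after the definition of $\Sim$ to present $F$ as the inverse limit of a sequence of finite-dimensional simplices $F_n$ with projection bonding maps $q_n^m \colon F_m \to F_n$. Using Lemma~\ref{stable} to pick a stable right inverse at each step (as indicated in Remark~\ref{RemDwaSzec}), I would arrange a chain $F_0 \subseteq F_1 \subseteq \cdots \subseteq F$ in which the limit projections $q_n^\infty \colon F \to F_n$ are $1$-Lipschitz affine retractions in the Hilbert-space metric coming from the construction of the inverse limit, satisfying $q_n^\infty \cmp q_m^\infty = q_{\min(n,m)}^\infty$ and converging pointwise to $\id{F}$.

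Next, since the family $\{q_n^\infty\}$ is equicontinuous (being $1$-Lipschitz) and $F$ is compact, the pointwise convergence $q_n^\infty \to \id{F}$ upgrades to uniform convergence, so I can fix $n$ large enough that $d_F(q_n^\infty(y), y) < \eps/3$ for every $y \in F$. At this stage $q_n^\infty \cmp f$ and $q_n^\infty \cmp g$ are projections from $\mathbb{P}$ onto the finite-dimensional face $F_n$, and Corollary~\ref{almhomsim} applied with tolerance $\eps/3$ produces an affine homeomorphism $H \colon \mathbb{P} \to \mathbb{P}$ such that $d_{F_n}\bigl(q_n^\infty \cmp f \cmp H(x),\, q_n^\infty \cmp g(x)\bigr) < \eps/3$ for every $x\in\mathbb{P}$. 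A triangle-inequality step, using that $d_{F_n}$ agrees with the restriction of $d_F$ to $F_n$ and that $q_n^\infty$ is $\eps/3$-close to the identity on $F$, then gives $d_F(f\cmp H(x), g(x)) < \eps$.

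The point that I expect to require the most care is the compatibility between the ambient metric $d_F$ and the Euclidean metrics $d_{F_n}$ on the finite-dimensional faces: to make the finite-dimensional corollary apply directly, it is cleanest to work with the Hilbert-space metric inherited from the realization of $F$ as an inverse limit of Euclidean simplices, in which all bonding maps are $1$-Lipschitz. Since any two compatible metrics on a compact metric space are uniformly equivalent, the statement for an arbitrary fixed compatible $d_F$ then follows after choosing $n$ appropriately. The remaining verifications reduce to the routine $\eps/3$ bookkeeping sketched above.
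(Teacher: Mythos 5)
Your proposal is correct and follows essentially the same route the paper intends: the paper derives Corollary~\ref{almhomsims} from Corollary~\ref{almhomsim} exactly as Corollary~\ref{almhoms} is derived from Corollary~\ref{almhom}, namely by realizing $F$ as an inverse limit of finite-dimensional simplices with $1$-Lipschitz affine bonding maps viewed as an increasing chain of faces, choosing $n$ with $q_n^\infty$ uniformly $\eps/3$-close to the identity, applying the finite-dimensional almost homogeneity to $q_n^\infty\cmp f$ and $q_n^\infty\cmp g$, and finishing with the triangle inequality. Your added remarks on uniform convergence via equicontinuity and on the uniform equivalence of compatible metrics on a compact space are correct refinements of details the paper leaves implicit.
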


The authors of \cite{LOS} proved a stronger homogeneity property: For every two closed faces $F$ and $G$ 
of the Poulsen simplex and an affine homeomorphism $h\colon F\to G$
there is an affine homeomorphism $H\colon\mathbb{P}\to\mathbb{P}$ extending $h$.

\section{Final remarks}

We say that a space $K$ is \emph{retractively universal} for a given class $\Pee$ of spaces, if for every $P \in \Pee$ there exists a retraction of $K$ whose image is homeomorphic to $P$.

A general question is which compact second countable spaces can be represented as inverse limits of sequences of retractions onto ``simpler" spaces.
A typical meaning of ``simple" could be a ``polyhedron".

\begin{prop}\label{univer}
	No metrizable compact space is retractively universal for the class of all non-empty 1-dimensional metrizable compact spaces.
\end{prop}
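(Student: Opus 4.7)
The plan is to suppose, for contradiction, that some compact metric space $K$ is retractively universal for the class $\Pee$ of all non-empty $1$-dimensional compact metric spaces, and to extract a contradiction using the first Čech cohomology functor $\check H^1(\argum;\Zee)$.

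Two general facts drive the argument. (i) If $r\colon K\to X$ is a retraction with section $e\colon X\to K$, then $r^*$ is a split monomorphism of Čech cohomology, so $\check H^1(X;\Zee)$ is isomorphic to a direct summand of $\check H^1(K;\Zee)$. (ii) For any compact metric space $K$, the group $\check H^1(K;\Zee)$ is countable: write $K$ as the inverse limit of a sequence of compact polyhedra $P_n$ and use continuity of Čech cohomology to obtain $\check H^1(K;\Zee)=\varinjlim H^1(P_n;\Zee)$, a countable direct limit of finitely generated abelian groups.

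Next I would bring in the solenoids. For each supernatural number $s=\prod_p p^{k_p}$ (with $k_p\in\{0,1,\dots,\infty\}$) there is a $1$-dimensional continuum $\Sigma_s$, built as an inverse limit of circles with bonding maps of appropriate degrees, satisfying $\check H^1(\Sigma_s;\Zee)\cong A_s$, where $A_s\subseteq \Qyu$ is the corresponding rank-one torsion-free abelian group (the subgroup generated by all $1/n$ with $n$ a finite divisor of $s$). By Baer's classification of rank-one torsion-free abelian groups, $A_s\cong A_{s'}$ if and only if the characteristics $(k_p)_p$ and $(k'_p)_p$ are equivalent (agree at all but finitely many primes, at which both exponents are finite); there are $2^{\aleph_0}$ such equivalence classes, giving $2^{\aleph_0}$ pairwise non-isomorphic groups $A_s$. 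The universality hypothesis forces each $\Sigma_s$ to be a retract of $K$, so each $A_s$ must appear as a direct summand of the countable group $\check H^1(K;\Zee)$.

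The main (and essentially only) obstacle is the finishing counting step, which must be handled with care because a countable abelian group can have continuum-many non-isomorphic rank-one torsion-free \emph{subgroups} (as $G=\Qyu$ itself illustrates). The saving observation is that for a \emph{direct summand} $H$ of $G$ and any $x\in H$, the $p$-height of $x$ computed in $G$ coincides with the $p$-height of $x$ computed in $H$, so the Baer type of $H$ is the type of any non-zero $x\in H$ computed in the ambient group $G$. A countable $G$ contains only countably many elements, hence realizes only countably many element-types, and so admits only countably many isomorphism classes of rank-one torsion-free direct summands. This contradicts the existence of $2^{\aleph_0}$ such summands furnished by the family $\{\Sigma_s\}_s$, completing the proof.
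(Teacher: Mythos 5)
Your proof is correct, but it takes a genuinely different route from the paper's. The paper invokes a Cook continuum: a one-dimensional hereditarily indecomposable continuum $C$ admitting an uncountable family $\Ef$ of subcontinua such that every continuous map between two distinct members of $\Ef$ is constant. Viewing $\Ef$ inside the hyperspace of a putative universal $K$ with the Hausdorff metric, one extracts a nontrivial sequence $\ciag F\subs\Ef$ converging to some $F\in\Ef$ with all diameters bounded below by some $\eps>0$; a retraction $r$ of $K$ onto $F$ would be constant on each $F_n$ yet the identity on $F$, contradicting continuity. Your argument instead runs through $\check H^1(\argum;\Zee)$: a retract contributes a direct summand to the (countable, by continuity of \v Cech cohomology along polyhedral resolutions) group $\check H^1(K;\Zee)$, while the solenoids realize $2^{\aleph_0}$ pairwise non-isomorphic rank-one torsion-free groups. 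You correctly isolate and resolve the one delicate point --- that ``direct summand'' rather than ``subgroup'' is essential, since heights computed in a summand agree with heights in the ambient group, so a countable group realizes only countably many Baer types among its rank-one summands. The trade-off: the paper's proof is very short but rests on the deep and inexplicit construction of Cook continua (and in fact shows non-universality already for the subcontinua of a single curve); yours uses only classical, explicitly constructed spaces and standard algebraic topology, at the price of importing the Baer classification of rank-one torsion-free abelian groups. Both proofs, as they must, refute universality already for a small subclass of one-dimensional continua.
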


\begin{pf}
The details are presented in \cite[Cor. 3.2.4]{Calka}, where it  is shown that no  
second countable compact space is retractively universal for the class of subcontinua of a Cook continuum.
A {\em Cook continuum}, constructed by Cook~\cite{Cook},  is a hereditary indecomposable continuum  $C$ of dimension one, such that
there exists an uncountable family $\mathcal{F}$ of subcontinua of $C$ with the property that the only continuous map from a continuum in $\mathcal{F}$
to another continuum in $\mathcal{F}$ is a constant map.

Suppose now that $K$ is a metric compact space containing each element of $\Ef$ as a retract, so we may assume that $\Ef$ is a family of compact retracts of $K$.
Looking at it as a separable metric space with the Hausdorff distance, we find $\eps>0$ and a non-trivial sequence $\ciag F \subs \Ef$ converging to some $F \in \Ef$ and such that $\diam(F_n) \goe \eps$ for $\ntr$. Then also $\diam(F) \goe \eps$. Let $\map r K F$ be a retraction. Then $r \rest F_n$ is constant for each $\ntr$, showing that $r$ cannot be continuous.
\end{pf}

\end{document}